\documentclass[a4paper,reqno]{amsart}
\usepackage[margin=1in]{geometry}
\usepackage{amsmath}
\usepackage{float}
\usepackage{cancel,sansmath,bm}
\usepackage{amssymb,mathtools,booktabs,bbm}
\usepackage{enumitem,multicol,cancel,setspace}
\allowdisplaybreaks

\renewcommand{\d}{\boldsymbol{\mathbf{d}}}

\newcommand{\bdot}{\boldsymbol{\mathbf{.}}}
\newcommand{\N}{\mathbbm{N}}							
\newcommand{\Z}{\mathbbm{Z}}							
\newcommand{\Q}{\mathbbm{Q}}							
\newcommand{\R}{\mathbbm{R}}							
\newcommand{\D}{\mathcal{D}}
\newcommand{\A}{\mathcal{A}}

\newcommand{\Ell}{\mathcal{L}}

\newcommand{\brc}[1]{\left\{ #1 \right\}}
\newcommand{\abs}[1]{\left| #1 \right|}
\newcommand{\pren}[1]{\left( #1 \right)}

\newcommand{\ol}[1]{\overline{#1}}

\newcommand{\mbf}[1]{\mathbf{#1}}

\newcommand*\tageq{\refstepcounter{equation}\tag{\theequation}}

\let\oldlim\lim

\renewcommand{\lim}[3]{\oldlim\limits_{#1 \to #2}{#3}}

\setlist{leftmargin=*}											
\DeclareMathOperator{\tr}{tr}

\DeclareMathOperator{\diag}{diag}

\newtheorem{theorem}{Theorem}
\newtheorem{lemma}[theorem]{Lemma}

\newtheorem{proposition}[theorem]{Proposition}

\theoremstyle{definition}
\newtheorem{definition}{Definition}

\newtheorem*{remark}{Remark}

\newcommand{\e}{\mathbf{e}}

\begin{document}

\author[A.G.R.~Cruz]{Anjelo Gabriel R.~Cruz}
\address[A.G.R.~Cruz]{Institute of Mathematics, University of the Philippines Diliman, 1101 Quezon City, Philippines}
\email{agcruz@math.upd.edu.ph}

\author[M.J.C.~Loquias]{Manuel Joseph C.~Loquias}
\address[M.J.C.~Loquias]{Institute of Mathematics, University of the Philippines Diliman, 1101 Quezon City, Philippines}
\email{mjcloquias@math.upd.edu.ph}

\thanks{A.G.R.~Cruz is grateful to the University of the Philippines System for financial support through its Faculty, REPS, and Administrative Staff Development Program.}

\subjclass[2020]{11A63, 11C20, 68Q45}

\keywords{matrix digit systems, rational number systems, radix representations, finite automata, expansion trees}

\title{A Matrix Analogue of Rational Number Systems}

\begin{abstract}
	Let $P,Q \in \Z^{d\times d}$ be invertible coprime matrices such that all the eigenvalues of $Q^{-1}P$ have modulus greater than 1,
	and $\Z^d[Q^{-1}P]$ be the smallest non-trivial $Q^{-1}P$-invariant $\Z$-module containing $\Z^d$.
	Suppose there is a finite digit set $\D \subseteq \Z^d + P\Z^d[Q^{-1}P]$ for which every vector $x \in \Z^d + P\Z^d[Q^{-1}P]$ can be represented in the form
	\[
		x = \sum_{i=0}^{\ell-1} (Q^{-1}P)^i Q^{-1}d_i,
	\]
	where the digits $d_i \in \D$ for all $i \in \brc{0,1,\ldots,\ell-1}$. 
	We call such a representation a \textit{$P/Q$-expansion} of $x$, and we say that the digit system $(P,Q,\D)$ has the finiteness property. 
	If, in addition, $\D$ is a complete set of residues of the quotient group $(\Z^d + P\Z^d[Q^{-1}P])/P\Z^d[Q^{-1}P]$, then the digits $d_0, d_1, \dots, d_{\ell-1}$ in the $P/Q$-expansion of $x$ are unique whenever $\ell \in \Z^+$ is minimal, and the resulting digit system is said to have the uniqueness property. We present sufficient conditions for the existence of a digit set $\D$ in which $(P,Q,\D)$ has the finiteness property. For $d=2$, we make use of finite automata to construct digit systems $(P,Q,\D)$ having both the finiteness and uniqueness properties. 
	We also obtain the $P/Q$-expansion of a  vector $x$ in $\R^d$ by means of the so-called \textit{expansion tree} of the digit system $(P,Q,\D)$.
\end{abstract}

\maketitle

\section{Introduction}

\subsection{Context of the paper} In 2008, Akiyama et al.~\cite{frac-akiyama} introduced a representation of positive integers in the rational number base $p/q$, where $p$ and $q$ are coprime positive integers with $p > q$. In this system, every positive integer $N$ can be written uniquely in the form \[N = \sum_{i=0}^k \dfrac{a_i}{q} \pren{\dfrac{p}{q}}^i\] with digits $a_0,a_1,\dots,a_k \in \brc{0,1,\dots,p-1}$ with $a_k \neq 0$. This representation extends to the real numbers by allowing negative integer powers of the base $p/q$ and infinite digit expansions. Such expansions differ from the so-called $\beta$-expansions first studied by R\'{e}nyi \cite{renyi} in 1957, which seek to represent real numbers with respect to a real base $\beta > 1$. 

Akiyama et al.~\cite{frac-akiyama} established several properties of this digit system that are relevant to well-known problems in combinatorics and number theory. 

In particular, the constant $K(p)$ considered by Odlyzko and Wilf \cite{odlyzko} in their study of the \textit{Josephus problem} (see \cite{halbeisen} and  \cite{odlyzko}) coincides with the constant $\omega_{\frac{p}{q}}$ when $q=p-1$.
The constant $\omega_{\frac{p}{q}}$ is the value of an infinite $p/q$-expansion to the right of the radix point that is lexicographically maximal 
among all admissible infinite $p/q$-expansions of real numbers. 
Furthermore, expansions in base $p/q$ are connected to \textit{Mahler's problem} \cite{mahler}, which arose from the study of Pisot and Vijayaraghavan's problem on the distribution of rational powers modulo 1 in the interval $[0,1]$.

The study of rational base digit systems has seen increased interest in recent years. For instance, Loquias et al.~\cite{loquias} formulated an analogue of rational based digit systems for polynomials over finite fields and investigated its properties, as well as its connection to a finite field version of Mahler's problem. More recently, Rossi and Thuswaldner \cite{rossi-thus} employed similar methods to express numbers in base $-3/2$ and studied the associated tiling properties. Knuth \cite{knuth} subsequently examined the representation space $\mathbbm{K}=\mathbbm{R}\times\mathbbm{Q}_2$ introduced in \cite{rossi-thus} and referred to its elements as ambidextrous numbers or ambinumbers.

In addition to digit systems with a rational number base, numeration systems whose base is an invertible square matrix $A$ with rational entries have also been studied.
In particular, Jankauskas and Thuswaldner determined necessary and sufficient conditions for the existence of such systems in \cite{jankauskas, jankauskas-ii}, with the latter providing a more detailed construction of these systems. Moreover, Rossi et al.~\cite{rossi} studied so-called \textit{rational self-affine tiles} associated with matrix digit systems.

The aim of this paper is to present an analogue of the $p/q$ number system for matrices of the form $Q^{-1}P$, where $P,Q \in \Z^{d\times d}$ are suitably chosen invertible matrices. In particular, we determine conditions on $P$ and $Q$ that ensure the existence of a finite subset $\D$ of $\Q^d$ such that a given vector $x$ can be expressed in the form
\[
	x = \sum_{i=0}^{\ell-1} (Q^{-1}P)^i Q^{-1}d_i
\]
with \textit{base} $Q^{-1}P$ and \textit{digits} $d_i \in \D$ for all $i \in \brc{0,1,\dots,\ell-1}$. We show that a suitable choice for $\D$ is any finite set containing a complete residue system of the quotient group $(\Z^d + P\Z^d[Q^{-1}P])/P\Z^d[Q^{-1}P]$, where $\Z^d[Q^{-1}P]$ is the smallest non-trivial $Q^{-1}P$-invariant $\Z$-module containing $\Z^d$. 

These results are presented in Section 2, following the approach of Jankauskas and Thuswaldner in~\cite{jankauskas-ii}.
In Section 3, we then construct finite letter-to-letter transducers that realize addition and subtraction by the canonical basis vectors of $\Z^2$ for certain choices of $P,Q \in \Z^{2\times 2}$ in the vein of Thuswaldner's results in \cite{thuswaldner}. Finally, we introduce in Section 4 a matrix analogue of the so-called \textit{expansion tree} considered by Akiyama et al.~in \cite{frac-akiyama} and use it to define a $P/Q$-expansion of a real vector $x$.

\subsection{Number systems with rational base} We begin by recalling the $p/q$ number system studied by Akiyama et al.~ in \cite{frac-akiyama} and the algorithm they introduced to obtain digit expansions with respect to this base. If $p$ and $q$ are coprime positive integers with $p > q$, then every positive integer $N$ can be written uniquely in the form
\begin{equation} \label{eq:pq_exp}
    N = \sum_{i=0}^{k} \dfrac{a_i}{q} \pren{\dfrac{p}{q}}^i
\end{equation}
where $k \in \N$ and $a_0, a_1, \dots, a_{k} \in \brc{0,1,\dots,p-1} =: \D_p$ with $a_{k} \neq 0$. The unique finite word $a_k a_{k-1} \cdots a_0$ over $\D_p$ obtained from (\ref{eq:pq_exp}) is called the \textit{$p/q$-expansion} of $N$ with \textit{digits} $a_0,a_1,\dots,a_k$. One notable difference between the expansion in (\ref{eq:pq_exp}) and both the classical expansion in the integer base~$p$ and generalized $\beta$-expansions is that the digits $a_i$ are divided by $q$.

The digits $a_0, a_1, \dots, a_{k}$ in (\ref{eq:pq_exp}) are obtained through a modified division algorithm.
We start by dividing $qN$ by $p$, obtaining a nonnegative quotient $N_1 < N$ and a remainder $a_0 \in \brc{0,1,\dots,p-1}$ such that
$qN = pN_1 + a_0$.
This process is then applied to $qN_1$, yielding a nonnegative quotient $N_2 < N_1$ and a remainder $a_1 \in \brc{0,1,\dots,p-1}$ satisfying $qN_1 = pN_2 + a_1$. Continuing in this manner produces a decreasing sequence of nonnegative integers $(N_j)_{j\geq 0}$ (where $N_0 := N$) and a sequence of remainders $(a_j)_{j\geq 0}$ in $\D_p$ such that 
\begin{equation} \label{eq:md_algo}
qN_j = pN_{j+1} + a_j
\end{equation}
for all $j\geq 0$. Because $(N_j)_{j\geq 0}$ is a decreasing sequence of nonnegative integers, there exists a minimal index $k$ such that $N_k \neq 0$ and $N_j = 0$ for each $j > k$. 
Solving for each $N_j$ in (\ref{eq:md_algo}) for $0\leq j\leq k$ and iteratively substituting back into the expression for $N_0$ gives the expansion of $N$ in (\ref{eq:pq_exp}).

The modified division algorithm described above can be viewed as the repeated iteration of the dynamical system $T : \Z \to \Z$ defined by 
\begin{equation} \label{eq:quot_func}
    T(n) = \dfrac{qn-d(qn)}{p}
\end{equation}
where $d(k) \in \D_p$ is the unique integer such that $k \equiv d(k) \pmod{p}$ for $k \in \Z$. In this formulation, $N_j = T^j(N)$ for all $j\geq 0$, and the sequence $\big(T^j(N)\big)_{j\geq 0}$ is eventually zero. The digits of $N$ are given by $a_j = d(qN_j)$ for $0\leq j\leq k$.

\subsection{Rational matrix digit systems}\label{RMDS}
 In this subsection, we recall matrix digit systems and state some results established by Jankauskas and Thuswaldner in \cite{jankauskas,jankauskas-ii}. Let $A$ be an invertible $d\times d$ matrix with rational entries. The set $\Z^d[A]$ is defined by
\[
\Z^d[A] = \bigcup_{k=1}^\infty (\Z^d + A\Z^d + \cdots + A^{k-1}\Z^d).
\]
Equivalently, $\Z^d[A]$ consists of all vectors $x \in \Q^d$ of the form
\[
    x = x_0 + Ax_1 + \cdots + A^{k-1}x_{k-1}    
\]
for some $k \in \N$ and $x_0, x_1, \dots, x_{k-1} \in \Z^d$. It is easy to see that $\Z^d[A]$ is the smallest nontrivial $A$-invariant $\Z$-module containing $\Z^d$. 

If $\D$ is a finite subset of $\Z^d[A]$, then the pair $(A,\D)$ is called a (\emph{matrix}) \textit{digit system}. The matrix $A$ is called the \textit{base} and $\D$ is called the \textit{digit set}.

The main objective of \cite{jankauskas-ii} is to determine whether there exists a finite digit set $\D \subseteq \Z^d[A]$ for which $\Z^d[A] = \D[A]$, where 
\[
\D[A] = \bigcup_{k=1}^\infty (\D + A\D + \cdots + A^{k-1}\D).
\]
If such a digit set $\D$ exists, then every vector $x \in \Z^d[A]$ can be written in the form
\begin{equation} \label{eq:mat_exp}
    x = d_0 + Ad_1 + \cdots + A^{k-1} d_{k-1}
\end{equation}
for some $k \in \N$ and $d_0,d_1,\dots,d_{k-1} \in \D$,
and we say that the digit system $(A,\D)$ has the \textit{finiteness property}.
It is clear that any digit set $\D$ for which $(A,\D)$ has the finiteness property must contain a complete system of residues of the quotient group $\Z^d[A]/A\Z^d[A]$, but the converse does not necessarily hold. If, in addition, $\abs{\D} = \abs{\Z^d[A]/A\Z^d[A]}$, then the expansion of $x \in \Z^d[A]$ in the form (\ref{eq:mat_exp}) is unique whenever $k$ is minimal. In this case, $(A,\D)$ is said to possess the \textit{uniqueness property}. A digit system $(A,\D)$ having both the finiteness and uniqueness properties is called \textit{standard}. For standard digit systems, it is also assumed that $0 \in \D$ in order to perform positional arithmetics. 

The division algorithm in the digit system $(A,\D)$ can be captured through a dynamical system analogous to (\ref{eq:quot_func}). If $\D \subseteq \Z^d[A]$ is a complete residue system modulo $A\Z^d[A]$, then a function $\d : \Z^d[A] \to \D$ is called a \textit{remainder function} or a \textit{digit function} if \[x \equiv \d(x) \pmod{A\Z^d[A]},\] i.e., if $x - \d(x) \in A\Z^d[A]$ for all $x \in \Z^d[A]$. Using $\d$, one defines a dynamical system $\Phi : \Z^d[A] \to \Z^d[A]$, called the \textit{quotient function}, by
\[
\Phi(x) = A^{-1}\big(x - \d(x)\big)
\]
for each $x \in \Z^d[A]$. It follows from the definition of $\Phi$ and an induction argument that for each $n \in \N$, there exist unique $d_0, d_1, \dots, d_{n-1} \in \D$ such that
\begin{equation} \label{eq:attr_exp}
    x = d_0 + Ad_1 + \cdots + A^{n-1}d_{n-1} + A^n \Phi^n(x).
\end{equation}
In fact, $d_j = (\d \circ \Phi^j)(x)$ for all $0\leq j\leq n-1$. If $\Phi^n(x) = 0$ for some $n \in \N$, then $x$ admits a representation of the form (\ref{eq:mat_exp}). Hence, $(A,\D)$ has the finiteness property if and only if for each $x \in \Z^d[A]$, there exists $n \in \N$ such that $\Phi^n(x) = 0$. This characterization leads to the following definition.

\begin{definition} \label{def:attractor}
A subset $\A$ of $\Z^d[A]$ is called an \textit{attractor} of $\Phi$ if the following conditions are satisfied:
\begin{enumerate}[label=\arabic*.]
    \item $\Phi(\A) \subseteq \A$;
    \item for every $x \in \Z^d[A]$, there exists $n = n(x) \in \N$ such that $\Phi^n(x) \in \A$;
    \item no proper subset of $\A$ satisfies (1) and (2).
\end{enumerate}
\end{definition}
It follows immediately from the definition that the attractor of $\Phi$, if it exists, is unique and we denote it by $\A_\Phi$. In general, $\A_\Phi$ may be finite or infinite. However, as shown in \cite{jankauskas-ii}, one can construct a digit system with the finiteness property whenever $\A_\Phi$ is finite \cite[Proposition 2.2]{jankauskas-ii}. In addition, certain conditions on the matrix $A$ guarantee that the attractor of $\Phi$ is finite, and hence $A$ is the base of a digit system with the finiteness property \cite[Proposition 2.5]{jankauskas-ii}. Both results are summarized in the following proposition.

\begin{proposition} \label{prop:fin_prop}
Let $(A,\D)$ be a matrix digit system with quotient function $\Phi$.
\begin{enumerate}[label=\arabic*.]
    \item If the attractor $\A_\Phi$ is a finite set, then the digit system $(A,\D \cup \A_\Phi)$ has the finiteness property.
    \item If every eigenvalue of $A$ has modulus greater than 1, then the attractor of $\Phi$ is finite.
\end{enumerate}
\end{proposition}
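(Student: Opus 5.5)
For part 1, I take $\D$ to be a complete residue system modulo $A\Z^d[A]$ with digit function $\d$, and $\Phi$ the associated quotient function. Given $x \in \Z^d[A]$, the attractor property gives $n$ with $\Phi^n(x) \in \A_\Phi$. Then (\ref{eq:attr_exp}) yields
\[
x = d_0 + Ad_1 + \cdots + A^{n-1}d_{n-1} + A^n \Phi^n(x)
\]
with $d_j \in \D$. Since $\Phi^n(x) \in \A_\Phi \subseteq \D \cup \A_\Phi$, this is an expansion of length $n+1$ with all digits in $\D\cup\A_\Phi$, so $x \in (\D\cup\A_\Phi)[A]$. The reverse inclusion is clear, because $\A_\Phi$ and $\D$ lie in $\Z^d[A]$ and $\Z^d[A]$ is $A$-invariant. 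The finiteness of $\A_\Phi$ is used only to make the new digit set finite, so part 1 is essentially immediate.

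For part 2, I will show that there is a bounded set $B$ which every orbit eventually enters and never leaves. I will also show that $B \cap \Z^d[A]$ is finite, or more precisely that the orbits land in a finite set.

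\emph{Contraction step.} Since all eigenvalues of $A$ exceed $1$ in modulus, $A^{-1}$ has spectral radius $<1$. I choose a norm $\|\cdot\|$ on $\R^d$ with $\|A^{-1}\| \le \lambda < 1$, for example an adapted norm built from the Jordan form. Let $c = \max_{d\in\D}\|d\|$. Then
\[
\|\Phi(x)\| \le \lambda(\|x\| + c),
\]
so for $R > \lambda c/(1-\lambda)$ the ball of radius $R$ is mapped into itself. Moreover, $\|\Phi^n(x)\| \le \lambda^n\|x\| + \lambda c/(1-\lambda)$, so every orbit enters this ball after finitely many steps.

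\emph{Finiteness step, the main obstacle.} $\Z^d[A]$ is in general not discrete, because $A$ has rational entries, so a ball may contain infinitely many of its points. My plan is to track denominators. Write $x = x_0 + Ax_1 + \cdots + A^{k-1}x_{k-1}$ and choose $N$ large enough that all digits lie in $\Z^d + A\Z^d + \cdots + A^{N-1}\Z^d$. I want to show that $\Phi$ maps $M_k := \Z^d + \cdots + A^{k-1}\Z^d$ into $M_{\max(k-1,N)}$, so that every orbit eventually stays in the lattice $M_N$. Then orbits lie in $M_N \cap$ ball, which is finite, and since $\Phi$ maps this finite forward-invariant set into itself, the attractor is the union of its periodic cycles and hence finite.

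The key claim is that $x - \d(x) \in A\Z^d[A] \cap (M_k + M_N)$ implies $A^{-1}(x-\d(x)) \in M_{\max(k,N)-1}$. I expect this to need the structure $A\Z^d[A] \cap M_k = A M_{k-1}$ (up to a fixed shift in $N$), which I would prove using the fact that the $M_k$ form an increasing chain of lattices. Failing that, I would fall back on the argument of \cite[Proposition 2.5]{jankauskas-ii}, where this is done.
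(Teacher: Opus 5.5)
\textbf{Verdict: right strategy, but the one nontrivial step of Part 2 is unproved, and as you state it, it is false.}

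Part 1 and your contraction step are fine, and Part 1 is exactly the standard argument. The paper proves neither part; it only quotes \cite[Propositions 2.2 and 2.5]{jankauskas-ii}.

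The gap is the finiteness step, which is the only nontrivial content of Part 2. You state the claim you need but do not prove it. With $N$ chosen only so that $\D\subseteq M_N$, the inclusion $\Phi(M_k)\subseteq M_{\max(k-1,N)}$ can fail. Take $A=\begin{bmatrix}0 & 3/2\\ 2 & 0\end{bmatrix}$, so $A^2=3I$ and $A$ is expanding. Then $\Z^2[A]=\tfrac12\Z\times\Z$ and $A\Z^2[A]=\tfrac32\Z\times\Z$. The set $\D=\{(0,0)^\top,(1,0)^\top,(2,0)^\top\}\subseteq M_1=\Z^2$ is a complete residue system, so $N=1$. But $(0,1)^\top\in A\Z^2[A]$, hence $\Phi\big((0,1)^\top\big)=A^{-1}(0,1)^\top=(\tfrac12,0)^\top\notin M_1$.

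You anticipate this with ``up to a fixed shift''. However, the existence of that shift is exactly what has to be proved. It does not follow from the $M_k$ forming an increasing chain of lattices, because that chain need not stabilize: for $A=3/2$ one has $M_k=2^{1-k}\Z$.

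The missing idea is to use Noetherianity inside $\Z^d$.
\begin{enumerate}
\item The subgroups $G_j:=\Z^d\cap AM_j$ increase, and their union is $\Z^d\cap A\Z^d[A]$. Since they all lie in $\Z^d$, the chain stabilizes, so $\Z^d\cap A\Z^d[A]\subseteq AM_J$ for some $J$.
\item Let $y=x_0+Ax_1+\cdots+A^{k-1}x_{k-1}\in M_k\cap A\Z^d[A]$. Then $x_0=y-A(x_1+\cdots)$ lies in $\Z^d\cap A\Z^d[A]\subseteq AM_J$. Hence $A^{-1}y\in M_{\max(J,k-1)}$.
\item Put $L:=\max(N,J+1)$. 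Step 2 gives $\Phi(M_k)\subseteq M_{k-1}$ for every $k\geq L$. So $M_L$ is $\Phi$-invariant, and every orbit enters it.
\item Combined with your forward-invariant ball $B_R$, every orbit ends in the finite invariant set $M_L\cap B_R$. The attractor is then the set of periodic points in that set, which is finite.
\end{enumerate}

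This is the same level-lowering mechanism the paper uses for $\Psi$ in Section 2.3. There the drop is automatic, because the digit corrects $x_0$ modulo $(Q^{-1}P)\Z^d$ itself. For $\Phi$ the digit only corrects modulo $A\Z^d[A]$, and $\Z^d\cap A\Z^d[A]$ can be strictly larger than $\Z^d\cap A\Z^d$. In the example above these are $3\Z\times\Z$ versus $3\Z\times 2\Z$. That discrepancy is what the threshold $J$ absorbs.
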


A matrix $A$ satisfying the second statement of Proposition \ref{prop:fin_prop} is said to be \textit{expanding}. Thus, any expanding matrix $A$ gives rise to a digit system that has the finiteness property (though not necessarily the uniqueness property).

\section{A Matrix Analogue of Rational Number Digit Systems}

We are now in a position to introduce an analogue of the modified division algorithm that produces a digit system analogous to the rational number based system studied by Akiyama et al.~in~\cite{frac-akiyama}. Following the framework in \cite{jankauskas}, we establish sufficient conditions for the existence of a matrix digit system possessing a form of the finiteness property.

\subsection{Lattices and coprime matrix pairs} We begin this section by recalling some concepts from the theory of lattices, which will be relevant to the discussion that follows. Further background on these topics can be found in \cite{baake, cassels, neukirch}. 

Let $d$ be a positive integer. A \textit{lattice $\Gamma$ in $\R^d$} (of rank and dimension $d$) is the $\Z$-span of $d$ linearly independent vectors in $\R^d$. That is, $\Gamma = \Z v_1 \oplus \Z v_2 \oplus \cdots \oplus \Z v_d$,
where $v_1, v_2, \dots, v_d$ are linearly independent vectors of $\R^d$, and $\Z v := \brc{nv \mid n \in \Z}$. The vectors $v_1, \dots, v_d$ are said to form a \textit{basis} for $\Gamma$.

A lattice may also be defined equivalently as a \textit{discrete, co-compact subgroup} of $\R^d$. That is, $\Gamma$ is a subgroup of $\R^d$ with no accumulation points such that the quotient group $\R^d/\Gamma$ is a compact topological group. Some basic properties of lattices are summarized in the following remark.
\begin{remark} Let $\Gamma$ be a lattice in $\R^d$.
\begin{enumerate}[label=\arabic*.]
    \item Then $\Gamma = A\Z^d$, where $A$ is an invertible $d \times d$ matrix with real entries. In fact, the columns of $A$ form a basis for $\Gamma$; for this reason, $A$ is called a \textit{basis matrix} of $\Gamma$. If $A$ is the $d\times d$ identity matrix, then $\Gamma$ is the \textit{integer lattice} $\Z^d$.
    
    \item If $\Gamma = A\Z^d = B\Z^d$ where $A,B \in \R^{d\times d}$ are invertible, then $A = BU$ for some integer matrix $U$ with determinant $\pm 1$, and conversely. Such a matrix $U$ is said to be \textit{unimodular}. Hence, any two basis matrices of $\Gamma$ have the same determinant in absolute value. In particular, any lattice having a unimodular basis matrix must be the integer lattice $\Z^d$.
    
    \item A \textit{sublattice} $\Gamma'$ of $\Gamma$ is a subgroup of finite index in $\Gamma$. Equivalently, $\Gamma'$ is a subgroup of $\Gamma$ that is itself a lattice in $\R^d$. In particular, the group-theoretic sum of two sublattices of $\Gamma$ is again a sublattice of~$\Gamma$.
    
    \item The lattice $\Gamma$ is \textit{locally finite}, i.e., it intersects any compact subset of $\R^d$ in only finite many points.
\end{enumerate}    
\end{remark}

Now let $P,Q \in \Z^{d \times d}$ be invertible. By Remark 3, the sum $P\Z^d + Q\Z^d$ is a sublattice of $\Z^d$, so there exists an invertible matrix $A \in \Z^{d \times d}$ such that $P\Z^d + Q\Z^d = A\Z^d$. It follows that the columns of $P$ and $Q$ lie in $A\Z^d$. In particular, there exist $P',Q' \in \Z^{d \times d}$ such that $P = AP'$ and $Q = AQ'$, i.e., $P' = A^{-1} P$ and $Q' = A^{-1} Q$ are invertible matrices with integer entries. Furthermore, \[P'\Z^d + Q'\Z^d = \Z^d.\] This observation motivates the following definition, which serves as a matrix analogue of coprime integers.
\begin{definition} \label{def:coprime}
Two invertible integer matrices $P$ and $Q$ are said to be \textit{coprime} if $P\Z^d + Q\Z^d = \Z^d$. 
\end{definition}
A more detailed discussion of coprime matrices, including their properties, constructions, and application to related fields can be found in \cite{pal-i,pal-ii,pal-iii}.

The preceding discussion implies that for any pair of invertible matrices $P, Q \in \Z^{d \times d}$, there is an invertible matrix $A \in \Z^{d \times d}$ such that $A^{-1}P$ and $A^{-1}Q$ are coprime.  Such a matrix is unique up to right multiplication by a unimodular matrix. The following proposition gives some properties of coprime matrices.

\begin{proposition} \label{prop:coprime}
Let $P,Q \in \Z^{d \times d}$.
\begin{enumerate}[label=\arabic*.]
    \item Then $P$ and $Q$ are coprime if and only if there exist $X,Y \in \Z^{d \times d}$ such that $PX + QY = I$.
    \item The matrices $P$ and $Q$ are coprime whenever $\det P$ and $\det Q$ are coprime.
\end{enumerate}
\end{proposition}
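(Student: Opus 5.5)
For part 1, I would prove both directions directly from Definition \ref{def:coprime}. Suppose first that $P\Z^d + Q\Z^d = \Z^d$. Then each standard basis vector $e_j$ lies in $P\Z^d + Q\Z^d$, so there are integer vectors $x_j, y_j \in \Z^d$ with $Px_j + Qy_j = e_j$. Let $X$ and $Y$ be the matrices whose $j$-th columns are $x_j$ and $y_j$. Then $PX + QY = I$ column by column.

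Conversely, suppose $PX + QY = I$ with $X, Y \in \Z^{d\times d}$. For any $v \in \Z^d$ we can write $v = P(Xv) + Q(Yv)$, and both $Xv$ and $Yv$ lie in $\Z^d$. Hence $\Z^d \subseteq P\Z^d + Q\Z^d$. The reverse inclusion holds because $P$ and $Q$ have integer entries.

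For part 2, I would use the adjugate. Set $p = \det P$ and $q = \det Q$. Since $\gcd(p,q)=1$, Bézout gives integers $a, b$ with $ap + bq = 1$. Recall that $P\,\mathrm{adj}(P) = pI$ and $Q\,\mathrm{adj}(Q) = qI$, and that both adjugates have integer entries. Now take
\[
X = a\,\mathrm{adj}(P), \qquad Y = b\,\mathrm{adj}(Q).
\]
These are integer matrices, and
\[
PX + QY = apI + bqI = I .
\]
By part 1, $P$ and $Q$ are coprime.

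A more lattice-theoretic route would be to note that $P\Z^d$ and $Q\Z^d$ contain $p\Z^d$ and $q\Z^d$ respectively, so their sum contains $p\Z^d + q\Z^d = \Z^d$. That argument needs the same fact, $p\Z^d \subseteq P\Z^d$, which again comes from the adjugate identity $pv = P(\mathrm{adj}(P)v)$.

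The argument is elementary. The only step that needs any care is making sure integrality is preserved, which is why I use the adjugate rather than the inverse matrix. Note also that invertibility of $P$ and $Q$ plays no role in part 1. In part 2, coprimality of $p$ and $q$ already forces them not both to be zero, and the adjugate identity holds even when one determinant vanishes, so the argument goes through regardless.
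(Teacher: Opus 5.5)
Your proof is correct. Part 1 is the paper's argument. For the converse, which the paper calls immediate, you write out $v = P(Xv) + Q(Yv)$ explicitly.

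For Part 2 you take a different route.
\begin{itemize}
\item The paper lets $A$ be a basis matrix of the lattice $P\Z^d+Q\Z^d$. Since $A^{-1}P$ and $A^{-1}Q$ are integer matrices, $\det A$ divides both $\det P$ and $\det Q$, so coprimality of the determinants forces $A$ to be unimodular.
\item You use B\'ezout, $a\det P + b\det Q = 1$, together with the adjugate identities. This gives the explicit certificate $X = a\,\mathrm{adj}(P)$, $Y = b\,\mathrm{adj}(Q)$, and you then invoke Part 1.
\end{itemize}

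Your version is more elementary and constructive. It needs no lattice structure theory, such as the existence of a basis matrix for $P\Z^d+Q\Z^d$. As you note, it also works when one of the two matrices is singular.

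The paper's version gives a sharper by-product. The index $[\Z^d : P\Z^d+Q\Z^d] = \abs{\det A}$ divides $\gcd(\det P,\det Q)$. Your B\'ezout/adjugate argument (or your lattice remark) in general only yields $\gcd(\det P,\det Q)\,\Z^d \subseteq P\Z^d+Q\Z^d$, that is, an index dividing $\gcd(\det P,\det Q)^d$. The paper's argument also fits its preceding discussion of the matrix $A$ that reduces an arbitrary pair $P,Q$ to a coprime pair.
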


\begin{proof}
For the first statement, suppose that $P$ and $Q$ are coprime. Let $\e_i$ denote the $i$th canonical basis vector of $\Z^d$. Then for each $i$, the equation $Px + Qy = \e_i$ has a solution $x_i,y_i \in \Z^d$. Setting $X = (x_1, x_2, \dots, x_d)$ and $Y = (y_1, y_2, \dots, y_d)$ yields $PX+QY=I$. The converse follows immediately from the definition.

For the second statement, let $A \in \Z^{d\times d}$ be a basis matrix of the lattice $P\Z^d + Q\Z^d$. Then $M := A^{-1}P$ and $N := A^{-1}Q$ have integer entries, so that $\det A$ divides both $\det P$ and $\det Q$. If $\det P$ and $\det Q$ are coprime, it follows that $\det A = \pm 1$, i.e., $A$ is unimodular. Therefore $P\Z^d + Q\Z^d = \Z^d$, so $P$ and $Q$ are coprime.
\end{proof}

The converse of the second statement does not hold in general. For example, the diagonal matrices $P = \diag(2,3)$ and $Q = \diag(3,2)$ have the same determinant, yet they are coprime.

\subsection{Remainder and quotient functions in the $P/Q$ system} \label{subsec:rem_div} 
We are now ready to introduce analogues of the remainder and quotient functions discussed in Section~\ref{RMDS} for a base given by the quotient of two matrices.  Let $d$ be a positive integer, and $P,Q \in \Z^{d\times d}$ be invertible coprime matrices. Consider the $\Z$-modules $\Z^d[Q^{-1}P]$ and $\Z^d[PQ^{-1}]$. It follows readily that $P\Z^d[Q^{-1}P]$ is a submodule of $\Z^d[PQ^{-1}]$. We claim that the coprimality of $P$ and $Q$ implies that	$P\Z^d[Q^{-1}P] = (PQ^{-1})\Z^d[PQ^{-1}]$, and consequently, $\Z^d + P\Z^d[Q^{-1}P] = \Z^d[PQ^{-1}]$. It then follows from \cite[Lemma 2.3]{jankauskas-ii}, with $A=PQ^{-1}$, that the abelian quotient group $(\Z^d + P\Z^d[Q^{-1}P]) / P\Z^d[Q^{-1}P]$ is finite.

Indeed, let 
\[v = P \sum\limits_{i=0}^{\ell-1} (Q^{-1}P)^ix_i \in P\Z^d[Q^{-1}P].\] Then
\begin{align*}
    v &= \sum\limits_{i=0}^{\ell-1} (PQ^{-1})^i(Px_i) = \sum\limits_{i=0}^{\ell-1} (PQ^{-1})^{i+1}(Qx_i) \in (PQ^{-1}) \Z^d[PQ^{-1}].
\end{align*}
Conversely, suppose \[w = \sum\limits_{i=0}^{\ell-1} (PQ^{-1})^{i+1}x_i \in (PQ^{-1}) \Z^d[PQ^{-1}].\] Because $P$ and $Q$ are coprime, we may write $x_i = Py_i + Qz_i$ for some $y_i, z_i \in \Z^d$, for each $i \in \brc{0,1,\dots,\ell-1}$. Thus,
\begin{align*}
    w &= \sum\limits_{i=0}^{\ell-1} (PQ^{-1})^{i+1}(Py_i + Qz_i)\\
    &= \sum\limits_{i=0}^{\ell-1} P(Q^{-1}P)^{i+1} y_i + \sum\limits_{i=0}^{\ell-1} (PQ^{-1})^i (Pz_i) \\
    &= P\pren{\sum_{i=0}^{\ell-1} (Q^{-1}P)^{i+1} y_i + \sum_{i=0}^{\ell-1} (Q^{-1}P)^i z_i} \in P\Z^d[Q^{-1}P].
\end{align*}
This proves the claim.

Our goal is to represent every vector in $\Z^d[Q^{-1}P]$ in a form analogous to (\ref{eq:pq_exp}), using a method similar to the modified division algorithm. Throughout, we assume that $\D$ is a finite set containing a complete residue system of $(\Z^d + P\Z^d[Q^{-1}P]) / P\Z^d[Q^{-1}P]$. A mapping $\d : \Z^d + P\Z^d[Q^{-1}P] \to \D$ is called a \textit{digit} or \textit{remainder function} if it satisfies
\[
\d(v) \equiv v \pmod{P\Z^d[Q^{-1}P]}
\]
i.e., $v - \d(v) \in P\Z^d[Q^{-1}P]$, for each $v \in \Z^d + P\Z^d[Q^{-1}P]$. 

We define the dynamical system $\Phi : \Z^d[Q^{-1}P] \to \Z^d[Q^{-1}P]$ associated to $\d$ by \[\Phi(x) = P^{-1}\big(Qx - \d(Qx)\big).\] Observe that $\Phi$ is well defined since $Qx \in \Z^d + P\Z^d[Q^{-1}P]$ and $Qx - \d(Qx) \in P\Z^d[Q^{-1}P]$ for all $x \in \Z^d[Q^{-1}P]$. Moreover, $\Phi$ may be viewed as a matrix analogue of the dynamical system $T$ defined in (\ref{eq:quot_func}). 
Arguing as in \cite{jankauskas-ii}, we obtain that for every $x \in \Z^d[Q^{-1}P]$ and $n\in\N$, there exist unique $d_0, d_1, \dots,d_{n-1} \in \D$ such that
\begin{equation} \label{eq:pq_dyn_exp}
    x = (Q^{-1}P)^n \Phi^n(x) + \sum_{i=0}^{n-1} (Q^{-1}P)^i Q^{-1}d_i.
\end{equation}
Here, $d_i := \d\big(Q\Phi^i(x)\big)$ for each $0\leq i\leq n-1$. If $\Phi^n(x) = 0$ for some minimal $n \in \N$, then (\ref{eq:pq_dyn_exp}) yields an expression for $x$ that is analogous to the base-$p/q$ expansion in (\ref{eq:pq_exp}). The resulting digit system is denoted by $(P,Q,\D)$. We say that the digit system $(P,Q,\D)$ has the \textit{finiteness property} if for all $x \in \Z^d[Q^{-1}P]$, there exist $d_0,d_1,\dots,d_{\ell-1} \in \D$ such that
\begin{equation} \label{eq:pq_mat_exp}
x = \sum\limits_{i=0}^{\ell-1} (Q^{-1}P)^i Q^{-1}d_i.
\end{equation}
If $\ell$ is minimal and $d_0, d_1, \dots, d_{\ell-1}$ are unique, then the digit system $(P,Q,\D)$ is said to have the \textit{uniqueness property}. The expression in (\ref{eq:pq_mat_exp}) is called the \textit{$P/Q$-expansion of $x$} with \textit{digits} $d_0,d_1,\dots,d_{\ell-1}$. The digit system $(P,Q,\D)$ is said to be \textit{standard} if it possesses both the finiteness and uniqueness properties. For such digit systems, we assume that $0 \in \D$ to allow padding with leading zeros when performing positional arithmetic.

Although the expansion in (\ref{eq:pq_dyn_exp}) appears to differ from that in (\ref{eq:attr_exp}) due to the multiplication of the digits by the matrix $Q^{-1}$, the following result shows that the digit system $(P,Q,\D)$ coincides with the digit system $(Q^{-1}P, Q^{-1}\D)$ studied in \cite{jankauskas-ii}.

\begin{proposition} \label{prop:digit_system}
Let $P,Q \in \Z^{d\times d}$ be coprime and $\D$ be a complete residue system of the quotient group $(\Z^d + P\Z^d[Q^{-1}P]) / P\Z^d[Q^{-1}P]$. Then the following statements hold.
\begin{enumerate}[label=\arabic*.]
	\item The set $Q^{-1}\D$ is a complete residue system of $\Z^d[Q^{-1}P] / (Q^{-1}P) \Z^d[Q^{-1}P]$.
	\item Let $\d' : \Z^d[Q^{-1}P] \to Q^{-1}\D$ and $\Phi' : \Z^d[Q^{-1}P] \to \Z^d[Q^{-1}P]$ denote the digit function and the associated dynamical system of the digit system $(Q^{-1}P,Q^{-1}\D)$, respectively. Then $\Phi' = \Phi$, where $\Phi$ is the dynamical system associated with the digit system $(P,Q,\D)$.
\end{enumerate}
\end{proposition}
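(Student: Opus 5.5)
The whole proof rests on one observation: multiplication by $Q^{-1}$ maps the pair of modules $\Z^d + P\Z^d[Q^{-1}P] \supseteq P\Z^d[Q^{-1}P]$ bijectively onto the pair $\Z^d[Q^{-1}P] \supseteq (Q^{-1}P)\Z^d[Q^{-1}P]$. Once this is in place, both statements follow by transporting residues through the group isomorphism induced by $Q^{-1}$.

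\textbf{Step 1: the submodules.} First, $Q^{-1}P\Z^d[Q^{-1}P] = (Q^{-1}P)\Z^d[Q^{-1}P]$ is immediate.

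\textbf{Step 2: the ambient modules.} It remains to show
\[
Q^{-1}\bigl(\Z^d + P\Z^d[Q^{-1}P]\bigr) = \Z^d[Q^{-1}P].
\]
For ``$\subseteq$'', take $y + Pz$ with $y\in\Z^d$ and $z\in\Z^d[Q^{-1}P]$. Coprimality gives $y = Pa + Qb$ with $a,b\in\Z^d$. Then
\[
Q^{-1}(y+Pz) = (Q^{-1}P)a + b + (Q^{-1}P)z \in \Z^d[Q^{-1}P].
\]
For ``$\supseteq$'', take $x = \sum_{i=0}^{\ell-1}(Q^{-1}P)^i x_i$. Then
\[
Qx = Qx_0 + P\sum_{i\ge1}(Q^{-1}P)^{i-1}x_i \in \Z^d + P\Z^d[Q^{-1}P].
\]
Equivalently, this follows from the identity $\Z^d + P\Z^d[Q^{-1}P] = \Z^d[PQ^{-1}]$ proved above together with $Q^{-1}(PQ^{-1})^i = (Q^{-1}P)^iQ^{-1}$, but the direct computation is cleaner. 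The only delicate point in the whole argument is remembering that the inclusion ``$\subseteq$'' genuinely needs coprimality to absorb $Q^{-1}\Z^d$.

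\textbf{Step 3: statement 1.} Multiplication by $Q^{-1}$ is an injective group homomorphism. By Steps 1 and 2 it induces an isomorphism
\[
(\Z^d + P\Z^d[Q^{-1}P])/P\Z^d[Q^{-1}P] \;\cong\; \Z^d[Q^{-1}P]/(Q^{-1}P)\Z^d[Q^{-1}P].
\]
A complete residue system on the left is therefore mapped to one on the right, so $Q^{-1}\D$ is a complete residue system.

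\textbf{Step 4: statement 2.} For $x\in\Z^d[Q^{-1}P]$, the residue system $Q^{-1}\D$ is complete, so $\d'(x)$ is the unique element of $Q^{-1}\D$ congruent to $x$. Now $Q^{-1}\d(Qx)\in Q^{-1}\D$, and
\[
x - Q^{-1}\d(Qx) = Q^{-1}\bigl(Qx - \d(Qx)\bigr) \in Q^{-1}P\Z^d[Q^{-1}P] = (Q^{-1}P)\Z^d[Q^{-1}P].
\]
By uniqueness of the representative, $\d'(x) = Q^{-1}\d(Qx)$. Hence
\[
\Phi'(x) = (Q^{-1}P)^{-1}\bigl(x - Q^{-1}\d(Qx)\bigr) = P^{-1}Q\,Q^{-1}\bigl(Qx - \d(Qx)\bigr) = P^{-1}\bigl(Qx - \d(Qx)\bigr) = \Phi(x).
\]
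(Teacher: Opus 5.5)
Your proof is correct and follows essentially the same route as the paper. Both arguments show $Q^{-1}(\Z^d + P\Z^d[Q^{-1}P]) = \Z^d[Q^{-1}P]$, using coprimality to absorb $Q^{-1}\Z^d$ into $\Z^d + (Q^{-1}P)\Z^d$, and then get $\d'(x) = Q^{-1}\d(Qx)$ from uniqueness of residues. Your framing via the induced quotient isomorphism has one small advantage: it makes explicit that the elements of $Q^{-1}\D$ are pairwise incongruent, which the paper's covering computation leaves implicit.
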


\begin{proof}
We begin by proving the first statement. Because $\D$ is a complete residue system of the quotient group $(\Z^d + P\Z^d[Q^{-1}P]) / P\Z^d[Q^{-1}P]$, we have \[\D + P\Z^d[Q^{-1}P] = \Z^d + P\Z^d[Q^{-1}P].\] Multiplying by $Q^{-1}$ and using the coprimality of $P$ and $Q$, we obtain

\begin{align*}
	Q^{-1}\D + (Q^{-1}P)\Z^d[Q^{-1}P] &= Q^{-1}\Z^d + (Q^{-1}P)\Z^d[Q^{-1}P]  \\
	&= \Z^d + (Q^{-1}P)\Z^d + (Q^{-1}P)\Z^d[Q^{-1}P] \\
	&= \Z^d + (Q^{-1}P)\Z^d[Q^{-1}P] \\
	&= \Z^d[Q^{-1}P].
\end{align*}
This proves the first statement. 

For the second statement, recall that $\d' : \Z^d[Q^{-1}P] \to Q^{-1}\D$ satisfies \[x \equiv \d'(x) \pmod{(Q^{-1}P)\Z^d[Q^{-1}P]}\] for all $x \in \Z^d[Q^{-1}P]$ and $\Phi' : \Z^d[Q^{-1}P] \to \Z^d[Q^{-1}P]$ is given by \[\Phi'(x) = P^{-1}Q\big(x - \d'(x)\big).\] Let $\d : \Z^d + P\Z^d[Q^{-1}P] \to \D$ be the digit function of the system $(P,Q,\D)$. Given $x\in \Z^d[Q^{-1}P]$, we have
\begin{align*}
	Qx \equiv \d(Qx) \pmod{P\Z^d[Q^{-1}P]} &\iff x \equiv Q^{-1}\d(Qx) \pmod{(Q^{-1}P)\Z^d[Q^{-1}P]}.
\end{align*}
Since $Q^{-1}\d(Qx) \in Q^{-1}\D$ and $x \equiv \d'(x) \pmod{(Q^{-1}P)\Z^d[Q^{-1}P]}$, it follows that $\d(Qx) = Q\d'(x)$. Therefore,
\[
	\Phi(x) = P^{-1}\big(Qx - \d(Qx)\big) = P^{-1}Q\big(x - \d'(x)\big) = \Phi'(x),
\]
as desired. This completes the proof.
\end{proof}

By Proposition \ref{prop:digit_system}, Proposition \ref{prop:fin_prop} carries over to the digit system $(P,Q,\D)$. We state this explicitly in the next Proposition.
Note that the attractor $\A_{\Phi}$ of the dynamical system $\Phi$ associated with the digit system $(P,Q,\D)$ is contained in $\Z^d[Q^{-1}P]$.

\begin{proposition} Let $P,Q \in \Z^{d\times d}$ be coprime, and $\Phi$ be the dynamical system associated with the digit system $(P,Q,\D)$. \label{prop:fin_attractor}
\begin{enumerate}[label=\arabic*.]

	\item If the attractor $\A_\Phi$ is a finite set, then the digit system $(P,Q,\D \cup Q\A_{\Phi})$ has the finiteness property.
	\item If $Q^{-1}P$ is expanding, then the attractor of $\Phi$ is finite.
\end{enumerate}
\end{proposition}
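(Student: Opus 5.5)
The plan is to reduce both statements to Proposition \ref{prop:fin_prop}, using Proposition \ref{prop:digit_system} as the bridge. Set $A = Q^{-1}P$. By the first part of Proposition \ref{prop:digit_system}, $Q^{-1}\D$ is a complete residue system of $\Z^d[A]/A\Z^d[A]$. Since $\D \subseteq \Z^d + P\Z^d[A]$, we have $Q^{-1}\D \subseteq Q^{-1}\Z^d + A\Z^d[A]$. Coprimality gives $Q^{-1}\Z^d = Q^{-1}(P\Z^d + Q\Z^d) \subseteq A\Z^d + \Z^d \subseteq \Z^d[A]$, so $Q^{-1}\D$ is a finite subset of $\Z^d[A]$. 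Hence $(A, Q^{-1}\D)$ is a matrix digit system in the sense of Section \ref{RMDS}.

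By the second part of Proposition \ref{prop:digit_system}, its quotient function $\Phi'$ equals $\Phi$. The attractor depends only on the map (Definition \ref{def:attractor}), so $\A_{\Phi'} = \A_\Phi$.

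For the second statement: if $Q^{-1}P$ is expanding, part 2 of Proposition \ref{prop:fin_prop} applied to $(A, Q^{-1}\D)$ gives that $\A_{\Phi'} = \A_\Phi$ is finite.

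For the first statement, suppose $\A_\Phi$ is finite.
\begin{enumerate}[label=\arabic*.]
	\item Part 1 of Proposition \ref{prop:fin_prop} shows that $(A, Q^{-1}\D \cup \A_\Phi)$ has the finiteness property. Thus every $x \in \Z^d[A]$ can be written as $x = \sum_{i=0}^{\ell-1} A^i e_i$ with each $e_i \in Q^{-1}\D \cup \A_\Phi$.
	\item Write $e_i = Q^{-1} d_i$ with $d_i \in \D \cup Q\A_\Phi$. This is exactly the required expansion $x = \sum_{i=0}^{\ell-1} (Q^{-1}P)^i Q^{-1} d_i$.
	\item It remains to check that $\D \cup Q\A_\Phi$ is an admissible digit set, that is, a finite subset of $\Z^d + P\Z^d[A]$ containing a complete residue system. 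Finiteness and the residue system are inherited from $\D$. For the containment, $\A_\Phi \subseteq \Z^d[A]$ gives $Q\A_\Phi \subseteq Q\Z^d[A]$. Moreover, $Q\Z^d[A] = Q\Z^d + \sum_{i \ge 1} P(Q^{-1}P)^{i-1}\Z^d \subseteq \Z^d + P\Z^d[A]$.
\end{enumerate}

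The main obstacle is bookkeeping rather than any real difficulty. One must make sure that the general digit-system framework of Proposition \ref{prop:fin_prop}, which is stated for a digit set that need not be a residue system, applies with the digit set $Q^{-1}\D$. One must also confirm that the membership conditions translate correctly under multiplication by $Q$. Both points follow from the module identities established above, in particular $\Z^d + P\Z^d[Q^{-1}P] = \Z^d[PQ^{-1}]$ and its $Q^{-1}$-translate.
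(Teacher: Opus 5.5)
Your proposal is correct and follows the same route as the paper. The paper simply observes that, by Proposition \ref{prop:digit_system}, $(P,Q,\D)$ is the matrix digit system $(Q^{-1}P,Q^{-1}\D)$ with the same quotient map $\Phi$ (hence the same attractor), so Proposition \ref{prop:fin_prop} carries over directly; your checks that $Q^{-1}\D\subseteq\Z^d[Q^{-1}P]$ and $Q\A_\Phi\subseteq\Z^d+P\Z^d[Q^{-1}P]$, together with $Q^{-1}\D\cup\A_\Phi=Q^{-1}(\D\cup Q\A_\Phi)$, just make that one-line transfer explicit.
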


\subsection[Expansions of integer vectors]{Expansions of integer vectors} We now focus our attention on finding expansions of vectors in $\Z^d$. For the matrix base $A$, the process of finding expansions of vectors in $\Z^d$ is done in \cite{jankauskas-ii} by considering a complete residue system $\D$ of $\Z^d/(\Z^d \cap A\Z^d)$. A \textit{restricted digit function} $\d_r : \Z^d \to \D$ is then defined by mapping $x \in \Z^d$ to its residue $\d_r(x) \in \D$ modulo the \textit{auxiliary lattice} $\Z^d \cap A\Z^d$. The corresponding dynamical system $\Phi_r : \Z^d \to \Z^d$ is then defined in a similar manner: \[\Phi_r(x) = A^{-1}\big(x - \d_r(x)\big).\]
The resulting mapping $\Phi_r$ allows one to perform remainder division while also preserving $\Z^d$, which is generally not the case when obtaining remainders modulo $A\Z^d[A]$. Moreover, $\Z^d/(\Z^d \cap A\Z^d)$ contains an isomorphic copy of $\Z^d[A]/A\Z^d[A]$ by \cite[Lemma 2.7]{jankauskas-ii} and so $\D$ can also be chosen to contain a complete set of coset representatives of $\Z^d[A]/A\Z^d[A]$.

One drawback of the remainder division introduced in Subsection \ref{subsec:rem_div} is the fact that the digit set $\D$ is dependent on both $P$ and $Q$. This is not the case for the modified division algorithm in \cite{frac-akiyama}, as the digit set $\D_p$ depends only on the numerator $p$. In light of this situation, our aim is to find expansions of vectors $x \in \Z^d$ having the form (\ref{eq:pq_mat_exp}) whose digits $d_i$ are representatives of the quotient group $\Z^d/P\Z^d$. This approach is similar to the restricted division in \cite{jankauskas-ii} but is ultimately different: an expansion of $x \in \Z^d$ of the form
\[
x = \sum_{i=0}^{n-1} (Q^{-1}P)^i Q^{-1}d_i
\]
where each $d_i \in \Z^d$ belongs to a complete residue system $\D$ of $\Z^d/P\Z^d$ may not necessarily be an integer expansion of the same vector $x$ when division is done modulo the auxiliary lattice $\Z^d/(\Z^d \cap Q^{-1}P\Z^d)$. In fact, $Q^{-1}\D$ may not necessarily consist of integers. That said, $\Z^d/P\Z^d$ and $\Z^d/(\Z^d \cap Q^{-1}P\Z^d)$ are isomorphic as groups whenever $P$ and $Q$ are coprime.

We begin with the following result, which relates the residue group $\Z^d/P\Z^d$ in our current setting with the group $(\Z^d + P\Z^d[Q^{-1}P])/ P\Z^d[Q^{-1}P]$ we considered in the previous section.

\begin{lemma} \label{lem:subgp}
Let $P,Q \in \Z^{d\times d}$ be coprime. Then $(\Z^d + P\Z^d[Q^{-1}P])/ P\Z^d[Q^{-1}P]$ is isomorphic to a subgroup of $\Z^d / P\Z^d$. If the determinants of $P$ and $Q$ are coprime, then
\[
\dfrac{\Z^d + P\Z^d[Q^{-1}P]}{P\Z^d[Q^{-1}P]} \cong \dfrac{\Z^d}{P\Z^d}.
\]
Furthermore, any complete residue system of $\Z^d/P\Z^d$ is also a complete residue system of $(\Z^d + P\Z^d[Q^{-1}P])/P\Z^d[Q^{-1}P]$.
\end{lemma}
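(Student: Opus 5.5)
The plan is to work with the natural map
\[
\pi:\Z^d \longrightarrow \dfrac{\Z^d + P\Z^d[Q^{-1}P]}{P\Z^d[Q^{-1}P]},\qquad \pi(x)=x+P\Z^d[Q^{-1}P].
\]
This map is clearly surjective, because every coset $z+Pw$ with $z\in\Z^d$ and $w\in\Z^d[Q^{-1}P]$ equals $\pi(z)$. Its kernel is $K:=\Z^d\cap P\Z^d[Q^{-1}P]$. Since $\Z^d\subseteq\Z^d[Q^{-1}P]$, we have $P\Z^d\subseteq K$. Hence the target group is isomorphic to $\Z^d/K$, which is a quotient of the finite abelian group $\Z^d/P\Z^d$ (finite of order $\abs{\det P}$). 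For the first assertion I will use the standard fact that every quotient $G/H$ of a finite abelian group $G$ is isomorphic to a subgroup of $G$. One way to see this is via Pontryagin duality: $\widehat{G/H}\cong H^\perp\le\widehat G\cong G$ and $G/H\cong\widehat{G/H}$. Alternatively one can use the structure theorem directly.

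For the second assertion, assume $\gcd(\det P,\det Q)=1$. I aim to show $K=P\Z^d$; this is the main step. Take $x\in K$ and write $x=Py$ with
\[
y=\sum_{i=0}^{\ell-1}(Q^{-1}P)^i x_i\in\Z^d[Q^{-1}P].
\]
\begin{itemize}
\item Since $Q^{-1}=(\det Q)^{-1}\operatorname{adj}(Q)$ has denominators dividing $\det Q$, we get $(\det Q)^{\ell}y\in\Z^d$.
\item Since $Py=x\in\Z^d$, we get $(\det P)\,y=\operatorname{adj}(P)x\in\Z^d$.
\end{itemize}
Because $(\det Q)^{\ell}$ and $\det P$ are coprime, choose integers $a,b$ with $a(\det Q)^\ell+b\det P=1$. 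Then $y=a(\det Q)^\ell y+b(\det P)y\in\Z^d$. Hence $x\in P\Z^d$, so $K=P\Z^d$ and $\pi$ induces the claimed isomorphism $\Z^d/P\Z^d\cong(\Z^d+P\Z^d[Q^{-1}P])/P\Z^d[Q^{-1}P]$.

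For the final assertion, let $\D$ be a complete residue system of $\Z^d/P\Z^d$. Surjectivity of $\pi$ together with $\D+P\Z^d=\Z^d$ shows that $\D$ meets every coset of $P\Z^d[Q^{-1}P]$ in $\Z^d+P\Z^d[Q^{-1}P]$. In the coprime-determinant case the kernel of the induced map on $\Z^d/P\Z^d$ is trivial, so distinct elements of $\D$ lie in distinct cosets. Hence $\D$ is a complete residue system of $(\Z^d+P\Z^d[Q^{-1}P])/P\Z^d[Q^{-1}P]$.

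Without the determinant hypothesis, the same argument only shows that $\D$ \emph{contains} a complete residue system, since $\pi$ may identify elements of $\D$. I therefore read the "furthermore" clause as applying under the coprime-determinant hypothesis, and I expect that to be the only delicate point in the statement. The rest is the routine denominator bookkeeping above.
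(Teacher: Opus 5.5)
Your proof is correct. For the first assertion it is the paper's argument: the natural map from $\Z^d$ has kernel $\Z^d\cap P\Z^d[Q^{-1}P]\supseteq P\Z^d$, and the Third Isomorphism Theorem applies. You also supply the quotient-to-subgroup step that the paper leaves implicit.

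For the coprime-determinant case you take a genuinely different route. The paper never computes the kernel; it counts instead. It sets $m=\abs{(\Z^d+P\Z^d[Q^{-1}P])/P\Z^d[Q^{-1}P]}$ and $n=\abs{\Z^d[QP^{-1}]/(QP^{-1})\Z^d[QP^{-1}]}$, and invokes the index formula $\abs{\det(PQ^{-1})}=m/n$ of Rossi et al. This gives $n\abs{\det P}=m\abs{\det Q}$, hence $\abs{\det P}\mid m$, and combining with $m\mid\abs{\det P}$ from the first part yields equality of orders. The final clause is then obtained by comparing $\abs{\D}$ with $m$.

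Your denominator/B\'ezout argument is more elementary and self-contained. From $(\det Q)^{\ell}y\in\Z^d$ and $(\det P)y\in\Z^d$ you force $y\in\Z^d$, so $\Z^d\cap P\Z^d[Q^{-1}P]=P\Z^d$. This needs neither the external index formula nor the first assertion. It exhibits the isomorphism explicitly as the one induced by $\Z^d\hookrightarrow\Z^d+P\Z^d[Q^{-1}P]$, and the final clause drops out from injectivity. The paper's counting route, in exchange, ties the lemma to the general theory of the groups $\Z^d[A]/A\Z^d[A]$, and yields $n=\abs{\det Q}$ as a by-product.

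Your reading of the last clause is also the right one. The paper's proof of it uses $\abs{\D}=\abs{\Z^d/P\Z^d}=m$, which is available only under the determinant hypothesis, and the clause genuinely fails otherwise. Take $P=\left[\begin{smallmatrix}0&1\\2&0\end{smallmatrix}\right]$ and $Q=\diag(2,1)$, which are coprime since $P\Z^2+Q\Z^2=\Z^2$. Then $(Q^{-1}P)^2=I$, $\Z^2[Q^{-1}P]=\tfrac12\Z\times\Z$ and $P\Z^2[Q^{-1}P]=\Z^2$. So the quotient is trivial, while $\Z^2/P\Z^2$ has order $2$.
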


\begin{proof}
We prove only the second statement as the first is proven in a manner similar to \cite[Lemma 2.7]{jankauskas-ii}, using the Third Isomorphism Theorem together with the fact that $P\Z^d \leq \Z^d \cap P\Z^d[Q^{-1}P] \leq \Z^d$. Suppose $P$ and $Q$ have coprime determinants and let 
\[
m := \abs{\dfrac{\Z^d[PQ^{-1}]}{(PQ^{-1}) \Z^d[PQ^{-1}]}} = \abs{\dfrac{\Z^d + P\Z^d[Q^{-1}P]}{P\Z^d[Q^{-1}P]}}, \text{ and }
n := \abs{\dfrac{\Z^d[QP^{-1}]}{(QP^{-1}) \Z^d[QP^{-1}]}}.
\]
By \cite[Proposition 2.2]{rossi}, $\abs{\det(PQ^{-1})} = \frac{\abs{\det P}}{\abs{\det Q}} = \frac{m}{n}$. Thus, $n \abs{\det P} = m \abs{\det Q}$. Since $\det P$ and $\det Q$ are coprime, it follows that $\abs{\det P}$ divides $m$. By the first statement, $m$ divides $\abs{\det P}$, and so \[\abs{\dfrac{\Z^d + P\Z^d[Q^{-1}P]}{P\Z^d[Q^{-1}P]}} = m = \abs{\det P} = \abs{\dfrac{\Z^d}{P\Z^d}},\] which shows isomorphism. If $\D$ is a complete set of coset representatives of $\Z^d/P\Z^d$, then $\D + P\Z^d = \Z^d$. Thus,
\[
	\D + P\Z^d[Q^{-1}P] = \D + P\Z^d + P\Z^d[Q^{-1}P] = \Z^d + P\Z^d[Q^{-1}P],
\]
i.e., $\D$ contains a complete residue system of $(\Z^d + P\Z^d[Q^{-1}P])/P\Z^d[Q^{-1}P]$. Because $\abs{\D} = \abs{\Z^d/P\Z^d} = \abs{(\Z^d + P\Z^d[Q^{-1}P])/P\Z^d[Q^{-1}P]}$ it follows that $\D$ is a complete set of coset representatives of $(\Z^d + P\Z^d[Q^{-1}P])/P\Z^d[Q^{-1}P]$.
\end{proof}

As mentioned previously, we now focus on introducing an alternative form of division whose remainders are residues of the group $\Z^d/P\Z^d$. This leads to the following definition.

\begin{definition} \label{def:restrict}
Let $\D \subseteq \Z^d$ be a finite digit set that contains a complete set of coset representatives of $\Z^d / P\Z^d$. A \textit{restricted digit function} is a mapping $\d_r : \Z^d \to \D$ satisfying $\d_r(x) \equiv x \pmod{P\Z^d}$. To this restricted digit function, we define the \textit{associated dynamical system} $\Phi_r : \Z^d \to \Z^d$ given by $\Phi_r(x) = P^{-1} \big(Qx - \d_r(Qx)\big)$.
\end{definition}

The attractor $\A_{\Phi_r} \subseteq \Z^d$ of the dynamical system $\Phi_r$ is defined analogously. As in Subsection \ref{subsec:rem_div}, for each $x \in \Z^d$ and each $n \in \Z^+$, one can find digits $d_0,d_1,\dots,d_{n-1} \in \D$ such that
\[
x = (Q^{-1}P)^n \Phi^n(x) + \sum_{i=0}^{n-1} (Q^{-1}P)^i Q^{-1}d_i.
\]

We can extend this restricted division procedure to all of $\Z^d[Q^{-1}P]$ using an approach analogous to the process done by Jankauskas and Thuswaldner in \cite{jankauskas-ii}. If $x \in \Z^d[Q^{-1}P]$, then there is a smallest $k \in \Z^+$ such that $x \in \Z^d_k[Q^{-1}P]$. Hence, $x$ has a smallest expansion of the form
\begin{equation} \label{eq:shortest}
x = x_0 + Q^{-1}Px_1 + \cdots + (Q^{-1}P)^{k-1} x_{k-1}
\end{equation}
where $x_0, x_1, \dots, x_{k-1} \in \Z^+$, not necessarily unique (e.g. we can add any $v \in \Z^d \cap Q^{-1}P\Z^d$ to $x_0$ and subtract $x_1$ by $P^{-1}Qv$). To make such an expansion unique, we can introduce a (lexicographic) order $\mathcal{O}$ on $k$-tuples of vectors in $\Z^d$ and choose an expansion of the form (\ref{eq:shortest}) such that $(x_0, \dots, x_{k-1})$ is smallest with respect to $\mathcal{O}$. 

Let (\ref{eq:shortest}) be the minimal expansion of $x \in \Z^d[Q^{-1}P]$ with respect to $\mathcal{O}$. We now define the dynamical system $\Psi : \Z^d[Q^{-1}P] \to \Z^d[Q^{-1}P]$ by
\begin{align*}
    \Psi(x) := P^{-1}[Qx - \d_r(Qx_0)] &= P^{-1}[Qx + P\Phi_r(x_0) - Qx_0] \\
    &= \Phi_r(x_0) + P^{-1}Q(x - x_0) \\
    &= [\Phi_r(x_0) + x_1] + (Q^{-1}P)x_2 + \cdots + (Q^{-1}P)^{k-2} x_{k-1}.
\end{align*}

Thus $\Psi$ maps $\Z^d_k[Q^{-1}P]$ into $\Z^d_{k-1}[Q^{-1}P]$, and $\Phi_r$ is $\Psi$ restricted to $\Z^d$. The attractor $\A_\Psi$ of $\Psi$ is defined analogously. The following result is an analogue of \cite[Lemma 2.9]{jankauskas-ii}, which shows that it suffices to look at the restricted dynamical system $\Phi_r$ in order to determine the attactor of the extended system~$\Psi$.

\begin{lemma} \label{lem:attractor}
Let $P,Q \in \Z^{d\times d}$ be coprime and $\D \subseteq \Z^d$ be a finite digit set that contains a complete set of coset representatives of $\Z^d / P\Z^d$. If $\Phi_r$ has finite attractor $\A_{\Phi_r}$ in $\Z^d$, then $\A_\Psi = \A_{\Phi_r}$. In particular, if $\Phi_r$ is ultimately zero in $\Z^d$, then $\Psi$ is also ultimately zero in $\Z^d[Q^{-1}P]$, and the digit system $(P,Q, \D \cup Q\A_{\Phi_r})$ has the finiteness property.
\end{lemma}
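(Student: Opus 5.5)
The plan is to use the computation preceding the statement: $\Psi$ maps $\Z^d_k[Q^{-1}P]$ into $\Z^d_{k-1}[Q^{-1}P]$ as long as $k\ge 2$, while on $\Z^d=\Z^d_1[Q^{-1}P]$ it agrees with $\Phi_r$. Let $x\in\Z^d[Q^{-1}P]$ and let $k$ be the least integer with $x\in\Z^d_k[Q^{-1}P]$. Applying the displayed formula
\[
\Psi(x)=[\Phi_r(x_0)+x_1]+(Q^{-1}P)x_2+\cdots+(Q^{-1}P)^{k-2}x_{k-1}
\]
repeatedly shows that $\Psi^{k-1}(x)\in\Z^d$. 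The formula holds for every representation of the form (\ref{eq:shortest}) of length $k$, not only the $\mathcal{O}$-minimal one. This is because $\d_r(Qx_0)$ is computed from the $\mathcal{O}$-minimal $x_0$ and then rewritten through $\Phi_r(x_0)$. So the length drops by one at each step, and after at most $k-1$ steps we reach an integer vector. Once the orbit is in $\Z^d$, we have $\Psi=\Phi_r$ there. This needs a small check: for $y\in\Z^d$ the minimal expansion has length $1$, so $y_0=y$ and $\Psi(y)=P^{-1}(Qy-\d_r(Qy))=\Phi_r(y)$. It follows that $\Psi^{k-1+n}(x)=\Phi_r^n(\Psi^{k-1}(x))$, and by the attractor property of $\Phi_r$ this lies in $\A_{\Phi_r}$ for some $n$.

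Next I check that $\A_{\Phi_r}$ meets the three conditions of Definition~\ref{def:attractor} for $\Psi$. Invariance holds because $\A_{\Phi_r}\subseteq\Z^d$ and $\Psi|_{\Z^d}=\Phi_r$. Attraction is the argument of the previous paragraph. For minimality, suppose $B\subsetneq\A_{\Phi_r}$ satisfies conditions (1) and (2) for $\Psi$. Then $B\subseteq\Z^d$, and $B$ is $\Phi_r$-invariant. Every $y\in\Z^d$ has $\Phi_r^n(y)=\Psi^n(y)\in B$ for some $n$. So $B$ satisfies (1) and (2) for $\Phi_r$, which contradicts the minimality of $\A_{\Phi_r}$. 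Strictly, this needs uniqueness of attractors, i.e.\ that a minimal set with (1) and (2) is the intersection of all such sets. Finiteness of $\A_{\Phi_r}$ settles it: every $\Phi_r$-orbit eventually enters the finite set and so ends in a cycle inside it. By minimality, $\A_{\Phi_r}$ is exactly the union of the periodic cycles of $\Phi_r$. Any set satisfying (1) and (2) must contain every periodic orbit, since a periodic point has to enter the set and the set is invariant. This identifies $\A_\Psi=\A_{\Phi_r}$.

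For the consequences: if $\Phi_r$ is ultimately zero, then $\A_{\Phi_r}=\{0\}$, so every $\Psi$-orbit reaches $0$. For the finiteness property I would follow the proof of Proposition~\ref{prop:fin_attractor}(1). Iterating $\Psi$ gives
\[
x=(Q^{-1}P)^n\Psi^n(x)+\sum_{i<n}(Q^{-1}P)^iQ^{-1}\delta_i,
\]
where $\delta_i=\d_r(Q(\Psi^i(x))_0)\in\D$ follows from $Qx-P\Psi(x)=\d_r(Qx_0)$. Stop at the first $n$ with $\Psi^n(x)=a\in\A_{\Phi_r}$ and write $(Q^{-1}P)^na=(Q^{-1}P)^nQ^{-1}(Qa)$, which is one more digit from $Q\A_{\Phi_r}$. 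This gives an expansion with digits in $\D\cup Q\A_{\Phi_r}$.

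The main obstacle is the step $\Psi|_{\Z^d}=\Phi_r$ together with the well-definedness of the length reduction when minimal expansions are not unique. I would handle it as described in the first paragraph: use the order $\mathcal{O}$ only to fix $x_0$, and observe that the remaining terms can come from any expansion.
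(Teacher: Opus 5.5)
Your proposal is correct and follows essentially the argument the paper defers to (Lemma 2.9 of Jankauskas--Thuswaldner). There, $\Psi$ sends $\Z^d_k[Q^{-1}P]$ into $\Z^d_{k-1}[Q^{-1}P]$ for $k\ge 2$ and agrees with $\Phi_r$ on $\Z^d$, so every orbit enters $\Z^d$; the attractor identification and the expansion with final digit $Qa\in Q\A_{\Phi_r}$ then follow as you describe.

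Two minor remarks:
\begin{itemize}
\item The displayed formula for $\Psi(x)$ holds only for representations sharing the $\mathcal{O}$-minimal $x_0$, not for arbitrary ones, since $\d_r(Qx_0')$ may differ. This does not matter, because you only need $\Psi(x)\in\Z^d_{k-1}[Q^{-1}P]$.
\item Uniqueness of attractors needs no finiteness: the intersection of two sets satisfying (1) and (2) again satisfies them.
\end{itemize}
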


\begin{proof}
Similar to \cite[Lemma 2.9]{jankauskas-ii}.
\end{proof}

As a consequence of Lemma \ref{lem:attractor}, it suffices to look at the dynamical system $\Phi_r$ acting on $\Z^d$ to obtain a digit system in $\Z^d[Q^{-1}P]$ having the finiteness property. By replacing $\Z^d[Q^{-1}P]$, $\Phi$, and $P\Z^d[Q^{-1}P]$ in Proposition \ref{prop:fin_attractor}, we see that if $Q^{-1}P$ is expanding, then the dynamical system $\Phi_r$ has finite attractor $\A_{\Phi_r}$ in $\Z^d$. By Lemma \ref{lem:attractor}, this gives rise to a digit system in $\Z^d[Q^{-1}P]$ having the finiteness property. This allows us to find radix expansions in $\Z^d[Q^{-1}P]$ while also preserving $\Z^d$.

\section[Attractors of Two-Dimensional $P/Q$ Digit Systems]{Attractors of Two-Dimensional $P/Q$ Digit Systems}

Suppose $(P,Q,\D)$ is a digit system. 
In the preceding sections, we showed that whenever $P,Q \in \Z^{d\times d}$ are coprime and $Q^{-1}P$ is expanding, then the attractor $\A$ of the dynamical system associated with $(P,Q,\D)$ is finite. 
We may then multiply the vectors in $\A$ by $Q$ and adjoin the resulting elements to $\D$ in order to obtain a digit system having the finiteness property. 
Note, however, that this procedure does not always yield a standard digit system when $\A$ is nonzero, since the uniqueness property is not satisfied.

In this section, we determine the attractor of digit systems $(P,Q,\D)$ where $P,Q \in \Z^{2\times 2}$ are invertible matrices of a particular form with coprime determinants 
(which implies by Proposition \ref{prop:coprime} that $P$ and $Q$ are themselves coprime), and $\D$ is a fixed digit set to be defined later. 
In view of Lemma \ref{lem:attractor}, it suffices to work with the restricted dynamical system $\Phi_r$, since extending to the full module $\Z^d[Q^{-1}P]$ preserves the attractor. 

To this end, we employ a technique from \cite{thuswaldner}, corresponding to the case where
\[
P = 
\begin{bmatrix}
    a & b \\
    \varepsilon & d
\end{bmatrix},
\]
with $a,b,d \in \Z$ and $\varepsilon = \pm 1$, while $Q$ is the $2\times 2$ identity matrix. 
A finite letter-to-letter transducer is constructed that takes as input the digit string corresponding to the finite base-$P$ expansion of a vector $v \in \Z^2$ 
and outputs a (possibly infinite, though always left-periodic) digit string corresponding to the expansion of the vectors $v \pm (1,0)^\top$ and $v \pm (0,1)^\top$. 
For a more detailed discussion of languages and automata theory, we refer the reader to \cite{allouche, hopcroft, sakarovitch}. 

Loosely speaking, the attractor of $(P,\D)$ arises from the infinite cycles of the transducer whose edge labels have input digit $0$. 
The construction of this transducer relies on the characteristic polynomial $\chi_P(x) = x^2 + \alpha x + \beta$ of $P$, 
together with the identity \[P^2 + \alpha P + \beta I = 0\] which governs the propagation of carries whene the digit expansion of a vector $v$ is incremented by~$(1,0)^\top$. 

Similar automata have also been studied for rational-base digit systems. 
In~\cite{frac-akiyama}, a transducer is constructed that takes as input the $p/q$-expansion of a positive integer $N$ and outputs that of $N+1$. 
Knuth \cite{knuth} carried out an analogous construction for the base $(-3/2)$ digit system, 
while Frougny and Sakarovitch \cite{berthe-rigo} discussed necessary and sufficient conditions for addition by 1 in the so-called $U$-systems.

\subsection{Introduction and statement of the main result}

Suppose $P,Q \in \Z^{d\times d}$ are coprime and $\D$ is a complete residue system of $\Z^d/P\Z^d$. 
Let $\d_r : \Z^d \to \D$ and $\Phi_r : \Z^d \to \Z^d$ denote the restricted digit and quotient functions arising from the digit system $(P,Q,\D)$ as in Definition \ref{def:restrict}. 
By (\ref{eq:pq_dyn_exp}), every $v \in \Z^d$ can be written in the form
\begin{equation} \label{eq:rep}
    v = (Q^{-1}P)^{k+1}p + \sum_{j=0}^k (Q^{-1}P)^jQ^{-1}a_j
\end{equation}
for some minimal $k \in \N$, some unique attractor element $p$, and unique digits $a_0, \dots, a_k \in \D$ (in this case, $p = \Phi_r^{k+1}(v)$). 
If the attractor of $\Phi_r$ is finite (which is guaranteed when $Q^{-1}P$ is expanding), then there exists a smallest positive integer $t$ such that $\Phi_r^t(p) = p$. 
Applying the same process to $p$, we obtain unique digits $b_0, b_1, \dots, b_{t-1} \in \D$ satisfying 
\[p = (Q^{-1}P)^t p + \sum\limits_{\ell=0}^{t-1} (Q^{-1}P)^\ell Q^{-1} b_\ell.\]
Iterating this equation $N$ times, for arbitrarily large $N$, yields
\begin{equation} \label{eq:attr}
    p = (Q^{-1}P)^{Nt} p + \sum_{\ell=0}^{Nt-1} (Q^{-1}P)^\ell Q^{-1} b_{\ell \bmod{t}}.
\end{equation}
Substituting this into (\ref{eq:rep}) gives
\begin{equation} \label{eq:pqadic}
\begin{aligned}[b]
    v &= (Q^{-1}P)^{Nt+k+1} p + \sum_{\ell=0}^{Nt-1} (Q^{-1}P)^{(k+1)+\ell} Q^{-1} b_{\ell \bmod{t}} + \sum_{j=0}^k (Q^{-1}P)^jQ^{-1}a_j \\
    &=: \big((b_{t-1} \cdots b_1 b_0)^\infty a_k \cdots a_1 a_0\big)_{P/Q}.    
\end{aligned}
\end{equation}
We refer to the expansion in (\ref{eq:pqadic}) as the \textit{$P/Q$-adic representation} of $v$.
Thus, each $v \in \Z^d$ corresponds to a unique eventually left-periodic infinite word $\sigma = (b_{t-1} \cdots b_1 b_0)^\infty a_k \cdots a_1 a_0$ over $\D$ such that $(\sigma)_{P/Q} = v$. 
If $b_{t-1} = \cdots = b_1 = b_0 = 0$, then we identify $\sigma$ with the finite word $a_k \cdots a_1 a_0$. 

The remainder of this section is devoted to the case $d = 2$. Let $P,Q \in \Z^{2\times 2}$ be invertible matrices of the form
\begin{equation} \label{eq:pq_form}
P =
\begin{bmatrix}
    a & b \\
    \varepsilon & d
\end{bmatrix},
Q =
\begin{bmatrix}
    r & s \\
    0 & \delta
\end{bmatrix},
\end{equation}
where $a,b,d,r,s \in \Z$ with $r > 0$, and $\varepsilon,\delta \in \brc{1,-1}$. 
We further assume that $\det P$ and $\det Q$ are coprime (which implies by Proposition \ref{prop:coprime} that $P$ and $Q$ are themselves coprime). 
Denote by $\hat{Q} \in \Z^{2\times 2}$ the adjugate matrix of $Q$, so that $Q^{-1} = \hat{Q}/(\det Q)$. 
Let $\chi(x) = x^2 + \alpha x + \beta$ be the characteristic polynomial of $Q^{-1}P$.
Then $\alpha = -\tr(Q^{-1}P) = -[\tr(P \hat{Q})]/ (\det Q)$ and $\beta = \det(Q^{-1}P) = (\det P)/(\det Q)$. 
Hence, \[(\det Q)\chi(x) = (\det Q) x^2 - [\tr(P \hat{Q})]x + (\det P)\] is a primitive polynomial with integer coefficients. 

Now, by Lemma \ref{lem:subgp}, we have $(\Z^2 + P\Z^2[Q^{-1}P])/(P\Z^2[Q^{-1}P]) \cong \Z^2/P\Z^2$. 
Hence, any complete residue system of $\Z^2/P\Z^2$ also serves as a complete residue system for $(\Z^2 + P\Z^2[Q^{-1}P]) / P\Z^2[Q^{-1}P]$. 
The following proposition shows that $\D = \{(0,0)^\top, (1,0)^\top, \dots, (\abs{\det P}-1,0)^\top\}$ is such a complete residue system,
and this will be the digit set considered throughout the remainder of the discussion.

\begin{proposition} \label{prop:crs_mat}
Suppose $P = \begin{bmatrix} a & b \\ \varepsilon & d \end{bmatrix} \in \Z^{2\times 2}$ is invertible with $\varepsilon \in \brc{\pm 1}$, and $p = \abs{\det P}$. Then $\D = \{(0,0)^\top, (1,0)^\top, \dots, (p-1,0)^\top\}$ is a complete residue system for $\Z^2/P\Z^2$.
\end{proposition}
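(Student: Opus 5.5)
Since $|\Z^2/P\Z^2| = |\det P| = p$ and $\D$ has exactly $p$ elements, it suffices to show that the elements of $\D$ lie in pairwise distinct cosets of $P\Z^2$. Equivalently, we show that the map $\Z \to \Z^2/P\Z^2$, $n \mapsto (n,0)^\top + P\Z^2$, is surjective with kernel exactly $p\Z$. We will prove both facts directly, using that the lower-left entry $\varepsilon$ is a unit.

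\textbf{Surjectivity.} Take $(x,y)^\top \in \Z^2$. Because $\varepsilon = \pm 1$, we can subtract $\varepsilon y\,(a,\varepsilon)^\top$, which is $\varepsilon y$ times the first column of $P$. This gives
\[
(x,y)^\top - P(\varepsilon y,0)^\top = (x - a\varepsilon y,\,0)^\top,
\]
so every coset contains a vector of the form $(n,0)^\top$. Next, $P(d,-\varepsilon)^\top = (ad - b\varepsilon,\, \varepsilon d - d\varepsilon)^\top = (\varepsilon\det P,\,0)^\top$, since $\det P = ad - b\varepsilon$ and $\varepsilon^2=1$. Hence $(\det P,0)^\top = (\pm p,0)^\top \in P\Z^2$. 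Reducing $n$ modulo $p$ therefore places every coset inside $\D + P\Z^2$.

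\textbf{Distinctness.} Suppose $(n,0)^\top \in P\Z^2$, say $(n,0)^\top = P(u,v)^\top$. The second coordinate gives $\varepsilon u + dv = 0$, so $u = -\varepsilon d v$. Substituting into the first coordinate gives $n = au + bv = -v(\varepsilon a d - b) = -\varepsilon v(ad - \varepsilon b) = -\varepsilon v\det P$. Thus $p \mid n$, so distinct elements of $\D$, which differ by an integer of absolute value less than $p$, are incongruent modulo $P\Z^2$.

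Both steps are short computations. The only point needing care is the sign bookkeeping in $\det P = ad - b\varepsilon$ together with $\varepsilon^2 = 1$. No counting argument is strictly needed, since surjectivity and injectivity are each shown directly; the index formula $|\Z^2/P\Z^2| = |\det P|$ serves only as a consistency check.
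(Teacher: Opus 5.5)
Your argument is correct and is essentially the paper's proof. You eliminate the second coordinate using the first column of $P$ (the paper phrases this as $\e_1$ and $P\e_1$ forming a basis of $\Z^2$), and you then reduce modulo $p$ using $(\det P)\e_1 \in P\Z^2$. The paper's Cayley--Hamilton witness $(\tr P)\e_1 - P\e_1$ is exactly your vector $(d,-\varepsilon)^\top$.

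The only difference is in the distinctness step. You check it directly by computing $\Z\e_1 \cap P\Z^2 = p\Z\e_1$, whereas the paper gets it by counting with $\abs{\Z^2/P\Z^2} = \abs{\det P}$.

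One harmless slip: $P(d,-\varepsilon)^\top = (ad - b\varepsilon, 0)^\top = (\det P, 0)^\top$, not $(\varepsilon\det P, 0)^\top$. This does not affect your conclusion.
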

\begin{proof}
Let $v \in \Z^2$. Since the matrix $U =
\begin{bmatrix}
    1 & a \\
    0 & \varepsilon
\end{bmatrix}
$ is unimodular, its columns $\e_1$ and $P\e_1$ form a basis for~$\Z^2$. 
Thus $v = m\e_1 + P(n\e_1)$ for some $m,n \in \Z$. 
Let $\chi_P(x)$ denote the characteristic polynomial of~$P$. Since 
\[\chi_P(P) \e_1 = P^2 \e_1 - (\tr P)P\e_1 + (\det P)\e_1 = \boldsymbol{0},\]
we may repeatedly use this identity to reduce $m$ modulo $p$, thereby writing $v + P\Z^2 = m\e_1 + P\Z^2$, where now $m \in \brc{0,1,\dots,p-1}$.
Hence, $v + P\Z^2\in \D + P\Z^2$, which shows that $\D$ contains a complete residue system modulo $P\Z^2$. 
Since $\abs{\D} = \abs{\Z^2/P\Z^2} = p$, it follows that $\D$ is a complete residue system for $\Z^2/P\Z^2$. 
\end{proof}

We now describe the setting for our main result. Suppose $P,Q \in \Z^{2 \times 2}$ are of the form (\ref{eq:pq_form}) with coprime determinants. 
In addition, assume that $Q^{-1}P$ is expanding with characteristic polynomial $\chi(x) = x^2 + \alpha x + \beta$. 
This ensures that the attractor of the dynamical system corresponding to the digit system $(P,Q,\D)$ is finite. 
Moreover, if $p=\abs{\det P}$, then $p>\abs{\det{Q}}=r$ and $p$ is coprime to $r$. 
Under these assumptions, we may now state the main result of this section. 
The remainder of this section is devoted to its proof.

\begin{theorem} \label{thm:automaton}
Suppose $P,Q \in \Z^{2\times 2}$ are of the form (\ref{eq:pq_form}) with coprime determinants and $p = \abs{\det P}$. 
Let $\D = \{(0,0)^\top, (1,0)^\top, \dots, (p-1,0)^\top\}$, $\chi(x) = x^2 + \alpha x + \beta$ be the characteristic polynomial of the expanding matrix $Q^{-1}P$, and $\A$ be the attractor of the digit system $(P,Q,\D)$.
\begin{enumerate}
    \item If $0 \leq \alpha \leq \beta-1$, then $\A = \brc{(0,0)^\top}$.
    \item If $1 < -\alpha \leq \beta - 1$, then $\A = \brc{(0,0)^\top, \big(1- \frac{d}{\delta}, \frac{\varepsilon}{\delta}\big)^\top, \cdots, K\big(1- \frac{d}{\delta}, \frac{\varepsilon}{\delta}\big)^\top}$, where $K$ is the largest nonnegative integer such that $K(p + r\alpha + r) \leq p-1$.
    \item If $0 < -\alpha \leq 1$ and $-\alpha \leq \beta - 1$, then $\A = \brc{(0,0)^\top}$.
    \item If $0 < -\alpha \leq -\beta-1$, then $\A$ contains the set $\brc{(0,0)^\top, (-1,0)^\top, \big(\tfrac{d}{\delta}, -\tfrac{\varepsilon}{\delta}\big)^\top}$.
    \item If $0 < \alpha \leq -\beta-1$, then $\A$ contains $\brc{(0,0)^\top, -\big(1- \tfrac{d}{\delta}, \tfrac{\varepsilon}{\delta}\big)^\top, \dots, -K\big(1- \tfrac{d}{\delta}, \tfrac{\varepsilon}{\delta}\big)^\top}$, where $K$ is the largest nonnegative integer such that $K(p - r\alpha - r) \leq p-1$.
\end{enumerate}
\end{theorem}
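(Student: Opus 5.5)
The approach is the carry-propagation method of \cite{thuswaldner}, adapted to the base $Q^{-1}P$. By Lemma \ref{lem:attractor} it suffices to work with $\Phi_r$ on $\Z^2$. The attractor $\A$ is exactly the set of $v\in\Z^2$ whose $P/Q$-adic representation (\ref{eq:pqadic}) is purely periodic. So the task is to decide which purely periodic words occur as representations of integer vectors.

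First I would record the arithmetic of the digits. Since $Q\e_1 = r\e_1$, we have $Q^{-1}(m,0)^\top = \tfrac{m}{r}\e_1$. Hence a word $a_k\cdots a_0$ has value $\tfrac1r\sum_j a_j A^j\e_1$ with $A=Q^{-1}P$. By the proof of Proposition \ref{prop:crs_mat}, $\e_1$ and $P\e_1$ span $\Z^2$. Multiplying $\chi(A)=0$ by $\det Q = r\delta$ gives the primitive integer relation
\[
r\delta A^2 - \tr(P\hat Q)A + \det P\,I = 0,
\qquad\text{equivalently}\qquad r A^2 + r\alpha A + r\beta I = 0 ,
\]
with $r\beta = \pm p$. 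Read on digit strings, this says that an overflow of $p$ in position $j$ can be traded for a carry of $-r\alpha\cdot(\operatorname{sgn}\beta)$ into position $j+1$ and $-r\cdot(\operatorname{sgn}\beta)$ into position $j+2$. Adding $\pm\e_1$ to $v$ amounts to adding $\pm r$ to digit $a_0$. I would then build the letter-to-letter transducer whose states are the pairs of pending carries $(c_1,c_2)$ and whose transitions are given by the carry rule. The first step is to show that, under $|\alpha|\le|\beta|-1$ (which follows from $A$ being expanding in the relevant cases), the reachable carry states stay in a finite box. Here $p>r$ and the inequalities on $\alpha,\beta$ are used.

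Second, I would identify $\A$ with the cycles of this transducer that read input $0$ and carry a nonzero state. Such a cycle outputs a periodic left tail, and the tail's value is a periodic point of $\Phi_r$. Conversely, every nonzero periodic point arises this way: start from $0$, repeatedly add $\pm\e_1$ and $\pm P\e_1$ (the latter is a shift), and use that every integer vector is reached. The vector $\big(1-\tfrac d\delta,\tfrac\varepsilon\delta\big)^\top$ should appear as the value contributed by one unit of circulating carry. I would verify this directly by checking that it is fixed (or periodic) under $\Phi_r$, computing $Q\,x - \d_r(Qx)$ and dividing by $P$.

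Then I would split into the cases of the theorem.
\begin{itemize}
\item In cases 1 and 3 ($\beta>0$, and $\alpha\ge0$ or $-1\le\alpha<0$), I expect every carry to be nonincreasing in size along zero-input edges, so it dies out and $\A=\{0\}$.
\item In case 2 ($\beta>0$, $\alpha<-1$), a carry of size $k$ regenerates itself: it produces digit $k(p+r\alpha+r)$, which is admissible exactly when $k\le K$. This yields the chain of fixed points $k\big(1-\tfrac d\delta,\tfrac\varepsilon\delta\big)^\top$ for $0\le k\le K$, and one shows there are no others.
\item In cases 4 and 5 ($\beta<0$) only containment is claimed. There it suffices to exhibit the listed points and check directly that they lie in periodic orbits of $\Phi_r$. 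For example, in case 4, $\Phi_r(-\e_1)$ should equal $\big(\tfrac d\delta,-\tfrac\varepsilon\delta\big)^\top$ and map back, giving a $2$-cycle; in case 5 one takes the sign-reversed analogue of case 2.
\end{itemize}

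The main obstacle is the exact equality in case 2: proving the transducer has no further zero-input cycles. This requires a careful bound on the carry box together with the digit constraint $0\le k(p+r\alpha+r)\le p-1$. I would handle it by tracking the quantity $c_2 - c_1$ along the cycle and showing it is forced to be constant.
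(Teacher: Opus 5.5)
\paragraph{Containment claims.} Your treatment of these is correct, and it is more direct than the paper's, which reads the points off the transducer diagrams (self-loops at $D$, $\overline{D}$ and the cycle $\overline{A}\to C\to\overline{A}$). With $x_1=\big(1-\tfrac d\delta,\tfrac\varepsilon\delta\big)^\top$ one gets $(Q-P)x_1=\big(r(1+\alpha+\beta),0\big)^\top$. Hence $kx_1$ is fixed by $\Phi_r$ exactly when $0\le k(p+r\alpha+r)\le p-1$ (case 2), and $-kx_1$ exactly when $0\le k(p-r\alpha-r)\le p-1$ (case 5). In case 4, $\d_r(-r\e_1)=(p-r)\e_1$ gives $\Phi_r(-\e_1)=(\tfrac d\delta,-\tfrac\varepsilon\delta)^\top$. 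The next digit is $-r\alpha\,\e_1$ with $1\le -r\alpha\le p-r$, which sends it back to $-\e_1$.

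\paragraph{The gap.} It lies in the equality statements of cases 1--3, whose whole content is a classification of cycles; the proposal does not supply one.

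In cases 1 and 3, ``every carry is nonincreasing, so it dies out'' does not follow. The loop $D\xrightarrow{0\mid\gamma}D$ of case 2 (with $\gamma=p+r\alpha+r$) is a carry of constant size that never dies, so you must show that no zero-input cycle exists at all. The paper gets this from the explicit transition relations for the states $\pm A,\pm B,\pm C,\pm D$: for instance, a zero-input loop at $D$ requires $0<-r\alpha-r$, i.e.\ $\alpha<-1$. It then checks that every zero-input walk reaches a terminal state.

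Your step ``$\A$ = zero-input cycles of the transducer'' is also wrong for the finite addition transducer. In case 2, the points $\ell x_1$ with $\ell\ge 2$ arise from the loop at $D$ reading input $\ell\gamma$, not $0$. This is why the paper feeds words $(\overline{\ell\gamma})^\infty u$ into the transducer and shows that the set of tails $\{(\overline{\ell\gamma})^\infty : 0\le \ell\le K\}$ is closed under the generating operations.

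Suppose instead you take all integer carry pairs as states, as your ``carry of size $k$'' suggests. Then $\A$ is indeed exactly the set of zero-input periodic states: every $v\in\Z^2$ equals $\tfrac1r(c_1\e_1+c_2Q^{-1}P\e_1)$ with $c_1\in\Z$, $c_2\in r\Z$, and one zero-input step is precisely $\Phi_r$. But then ``no further cycles'' is the whole theorem, and your proposed invariant does not help. Write $\mu_jp$ for the amount added at position $j$. On a cycle the carries are $c_1=r\mu_{j-2}+r\alpha\mu_{j-1}$ and $c_2=r\mu_{j-1}$. Constancy of $c_2-c_1$ is then equivalent to $(1-\alpha)(\mu_j-\mu_{j-1})=\mu_{j-1}-\mu_{j-2}$. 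Since $1-\alpha>2$, this says exactly that $\mu$ is constant, i.e.\ that the cycle is one of the loops at $kD$. So ``showing $c_2-c_1$ is forced to be constant'' merely restates what has to be proved.

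\paragraph{Smaller points.} For $\beta>0$, expansiveness only gives $|\alpha|<\beta+1$, not $|\alpha|\le\beta-1$. The latter is a hypothesis of each case, and the paper notes that part of the range $\beta-1<\alpha<\beta$ needs additional analysis. Also, adding $P\e_1$ is not a shift; shifting is multiplication by $Q^{-1}P$. This is harmless, since any integer vector is an admissible initial carry pair. The paper uses the basis $\e_1\leftrightarrow(r,0)$ and $(\tfrac d\delta,-\tfrac\varepsilon\delta)^\top\leftrightarrow(-r\alpha,-r)$, i.e.\ the states $A$ and $C$.
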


Theorem \ref{thm:automaton} is an analogue of \cite[Theorem 2.1]{thuswaldner} in the present setting. 
As mentioned earlier, Theorem~\ref{thm:automaton} will be proved by constructing a finite letter-to-letter transducer 
that takes as input the $P/Q$-digit expansion of a vector $v \in \Z^2$ and outputs the expansion of $v \pm (1,0)^\top$. 
We recall below the definition of a finite letter-to-letter transducer. 

\begin{definition}
A \textit{finite letter-to-letter transducer} or \textit{automaton} is an ordered 6-tuple $\mathcal{T} = (\mbf{Q},\mathcal{A},\mathcal{B},\mbf{E},\mbf{I},\mbf{T})$, where:
\begin{itemize}
	\item $\mbf{Q}$ is a finite nonempty set whose elements are called \textit{states};
	\item $\mathcal{A}$ and $\mathcal{B}$ are finite sets of \textit{digits}, called the \textit{input} and \textit{output alphabets}, respectively;
	\item $\mbf{E} \subseteq \mbf{Q} \times \mathcal{A} \times \mathcal{B} \times \mbf{Q}$ is called the \textit{transition relation}; and
	\item $\mbf{I}$ and $\mbf{T}$ are nonempty subsets of $\mbf{Q}$, called the sets of \textit{initial} and \textit{terminal} states, respectively.
\end{itemize}
\end{definition}
A transducer $\mathcal{T}$ may be represented by a directed edge-labeled graph known as a \textit{transition diagram}. 
Here, $\mbf{Q}$ is the set of vertices, and a directed edge is drawn from a vertex $q_1$ to vertex $q_2$ with label $a \mid b$ if and only if $(q_1,a,b,q_2) \in \mbf{E}$. 
Such an edge is denoted by \[q_1 \xrightarrow{a \mid b} q_2.\] 
A sequence of the form
\[
\mathcal{P} : q_0 \xrightarrow{a_0 \mid b_0} q_1 \xrightarrow{a_1 \mid b_1} q_2 \xrightarrow{a_2 \mid b_2} \cdots \xrightarrow{a_{n-1} \mid b_{n-1}} q_n \xrightarrow{a_n \mid b_n} \cdots
\]
where $q_i \in \mbf{Q}$, $(a_i,b_i) \in \mathcal{A} \times \mathcal{B}$, and $(q_i, a_i, b_i, q_{i+1}) \in \mbf{E}$ for all $i\ge 0$, is called a \textit{walk} in the transducer $\mathcal{T}$. 
The infinite word $\mbf{a} = \cdots a_n a_{n-1} \cdots a_1 a_0 \in \mathcal{A}^\N$ is called the \textit{input} of $\mathcal{P}$, 
and the word $\mbf{b} = \cdots b_n b_{n-1} \cdots b_1 b_0 \in \mathcal{B}^\N$ is called its corresponding \textit{output}. 
The pair $\mbf{a} \mid \mbf{b}$ is called the \textit{label} of $\mathcal{P}$.
For further background on automata theory and transducers, we refer the reader to \cite{allouche,hopcroft,sakarovitch}.

 The equation
\[
(Q^{-1}P)^2 + \alpha(Q^{-1}P) + \beta I_2 = \boldsymbol{0}
\]
will be used extensively to handle the carries that occur when adding $(1,0)^\top$. 
Consequently, the conditions imposed on $\alpha$ and $\beta$ affect the resulting automata, 
and the transition diagrams differ across the five cases of Theorem \ref{thm:automaton}.

Let $\D^\N$ denote the set of all left-infinite words whose digits are in $\D$. 
If $\sigma \in \D^\N$ is eventually left-periodic, let $\sigma^{\pm A}$, $\sigma^{\pm B}$, $\sigma^{\pm C}$, and $\sigma^{\pm D}$ denote the eventually left-periodic words in $\D^\N$ defined by
\begin{align*}
    (\sigma^{\pm A})_{P/Q} &= (\sigma)_{P/Q} \pm (1,0)^\top, \\
    (\sigma^{\pm B})_{P/Q} &= (\sigma)_{P/Q} \pm (Q^{-1}P)(1,0)^\top \pm (\alpha - 1, 0)^\top, \\
    (\sigma^{\pm C})_{P/Q} &= (\sigma)_{P/Q} \mp (Q^{-1}P)(1,0)^\top \mp (\alpha,0)^\top, \text{and} \\
    (\sigma^{\pm D})_{P/Q} &= (\sigma)_{P/Q} \pm (Q^{-1}P)(1,0)^\top \pm (\alpha + 1,0)^\top.
\end{align*}
These operations determine the states of the transducers in all cases of Theorem \ref{thm:automaton}.
Moreoever, the set $\D = \brc{(0,0)^\top,(1,0)^\top,\dots,(p-1,0)^\top}$, where $p = \abs{\det P}$, serves as both the input and output alphabet. 
To identify the transition relations, we determine the effects of the above operations on the rightmost digit of a given digit string $\sigma$.

\subsection{The case $0 \leq \alpha \leq \beta-1$} \label{subsec:automaton1}
For our first case, we assume that the coefficients $\alpha = -\tr(Q^{-1}P)$ and $\beta = \det(Q^{-1}P)$ of the characteristic polynomial $\chi(x)$ are both positive and satisfy $\alpha \leq \beta-1$.
Under these assumptions, we show that the attractor of the digit system $(P,Q,\D)$ is $\brc{\boldsymbol{0}}$. 
Since $p=r\beta$, we have $p> r\alpha$. 
Let $\nu \in \brc{0,1,\dots,p-1}$. 
Following the notation of \cite{thuswaldner}, we identify $(\nu,0)^\top$ with $\nu$, and a horizontal bar over an expression indicates that the expression is interpreted as a single digit. 
A direct computation yields the following relations:
\begin{align*}
    (\sigma\nu)^{\pm A} &=
    \begin{cases}
        \sigma \ol{\nu \pm r}, & 0 \leq \nu \pm r < p \\
        \sigma^{\pm C} \, \ol{\nu \pm r \mp p}, & 0 \leq \nu \pm r \mp p < p,
    \end{cases} \tageq \label{eq:relnA1} \\
    (\sigma\nu)^B &=
    \begin{cases}
        \sigma^A \, \ol{\nu + r\alpha - r}, & r-r\alpha \leq \nu < p-r\alpha+r \\
        \sigma^{-B} \, \ol{\nu + r\alpha - r - p}, & p-r\alpha+r \leq \nu < p \\
        \sigma^{D} \, \ol{\nu + r\alpha - r + p}, & 0 \leq \nu < r-r\alpha, \\
    \end{cases} \tageq \label{eq:relnB1} \\
    (\sigma\nu)^{-B} &=
    \begin{cases}
        \sigma^{-A} \, \ol{\nu - r\alpha + r}, & r\alpha-r \leq \nu < p + r\alpha - r \\
        \sigma^B \, \ol{\nu - r\alpha + r + p}, & 0 \leq \nu < r\alpha-r \\
        \sigma^{-D} \, \ol{\nu - r\alpha + r - p}, & p + r\alpha - r \leq \nu < p,
    \end{cases} \tageq \label{eq:relnnegB1} \\
    (\sigma\nu)^{\pm C} &=
    \begin{cases}
        \sigma^{\mp A} \, \ol{\nu \mp r\alpha}, & 0 \leq \nu \mp r\alpha < p\\
        \sigma^{\pm B} \, \ol{\nu \mp r\alpha \pm p}, & 0 \leq \nu \mp r\alpha \pm p < p,         
    \end{cases} \\
    (\sigma\nu)^{\pm D} &=
    \begin{cases}
        \sigma^{\pm A} \, \ol{\nu \pm r\alpha \pm r}, & 0 \leq \nu \pm r\alpha \pm r < p \\
        \sigma^{\mp B} \, \ol{\nu \pm r\alpha \pm r \mp p}, & 0 \leq \nu \pm r\alpha \pm r \mp p < p.
    \end{cases}
\end{align*}

We prove only (\ref{eq:relnA1}) for $(\sigma\nu)^{A}$. The remaining relations are obtained similarly. 
By definition,
\begin{align*}
	\big((\sigma\nu)^A\big)_{P/Q} &= (Q^{-1}P)(\sigma)_{P/Q} + Q^{-1}(\nu,0)^\top + (1,0)^\top \\
	&= (Q^{-1}P)(\sigma)_{P/Q} + Q^{-1}(\nu,0)^\top + Q^{-1}(r,0)^\top \\
	&= (Q^{-1}P)(\sigma)_{P/Q} + Q^{-1}(\nu+r,0)^\top.
\end{align*}
If $\nu < p-r$, then $\nu + r \in \D$ and so $(\sigma\nu)^A = \sigma \, \ol{\nu+r}$. 
Otherwise, since $\nu+r \notin \D$, a carry occurs in the addition by $(1,0)^\top$ corresponding to the identity 
\[Q^{-1}(p,0)^\top = Q^{-1}(r\beta,0)^\top = (\beta,0)^\top = -(Q^{-1}P)^2(1,0)^\top - (Q^{-1}P)(\alpha,0)^\top.\] 
Thus,
\begin{align*}
	\big((\sigma\nu)^A\big)_{P/Q} &= (Q^{-1}P)(\sigma)_{P/Q} + Q^{-1}(\nu+r-p,0)^\top + Q^{-1}(p,0)^\top \\
	&= (Q^{-1}P)(\sigma)_{P/Q} + Q^{-1}(\nu+r-p,0)^\top -(Q^{-1}P)^2(1,0)^\top - (Q^{-1}P)(\alpha,0)^\top \\
	&= (Q^{-1}P)\big[ (\sigma)_{P/Q} - (Q^{-1}P)(1,0)^\top - (\alpha,0)^\top \big] + Q^{-1}(\nu+r-p,0)^\top \\
	&= (Q^{-1}P) (\sigma^C)_{P/Q} + Q^{-1}(\nu+r-p,0)^\top,
\end{align*}
with $0 \leq \nu + r - p < p$. Thus $(\sigma \nu)^A = \sigma^C \, \ol{\nu+r-p}$. 

Observe that (\ref{eq:relnB1}) and (\ref{eq:relnnegB1}), corresponding to $(\sigma\nu)^B$ and $(\sigma\nu)^{-B}$, each give three possible relations. 
However, depending on the sign of $r-r\alpha$, at most two of these relations can occur.
Thus, the resulting automaton depends on whether $\alpha>1$, $\alpha=1$, or $\alpha<1$.
An example of the case $\alpha > 1$ is shown in Figure~\ref{fig:automaton11}. 
It models the addition of $(1,0)^\top$ in the digit system $(P,Q,\D)$ where $P = \begin{bmatrix*}[r] -2 & 6 \\ 1 & 1\end{bmatrix*}$ and $Q = \begin{bmatrix*}[r] 3 & 1 \\ 0 & -1 \end{bmatrix*}$. 
In this case, $Q^{-1}P = \begin{bmatrix*}[r] -\tfrac{1}{3} & \tfrac{7}{3} \\ -1 & -1 \end{bmatrix*}$ and $\alpha = -\tr(Q^{-1}P) = 4/3 > 1$. 

To obtain the digit string $\sigma^a$ for $a \in \brc{\pm A, \pm B, \pm C, \pm D}$, we feed $\sigma$ into the transducer, starting at the state labeled $a$. 
The transducer then reads the digits of $\sigma$ from right to left, following edges whose input labels match the digit being read.
Each edge is labeled $j \mid k$, where $j$ is the input digit and $k$ is the corresponding output digit.

Let us consider, for instance, the automaton in Figure \ref{fig:automaton11}. 
Suppose $v \in \Z^2$ has digit representation $\sigma$. 
Then obtaining the digit representation of $v+(1,0)^\top$ is equivalent to determining $\sigma^A$. 
To do so, the digits of $\sigma$ are fed into the transducer and read from right to left, following the edge whose input label matches the digit being read. 
If, while reading the digits of $\sigma$, the automaton reaches a terminal state indicated by a black dot, 
then all remaining digits of $\sigma$ are copied unchanged to the output string. 
The same procedure is used to obtain the $P/Q$-expansion of $v-(1,0)^\top$, except that the representation of $v$ is fed into the automaton starting at the state labeled $\ol{A}$.

\begin{figure}[h!]
	\begin{center}
		\includegraphics{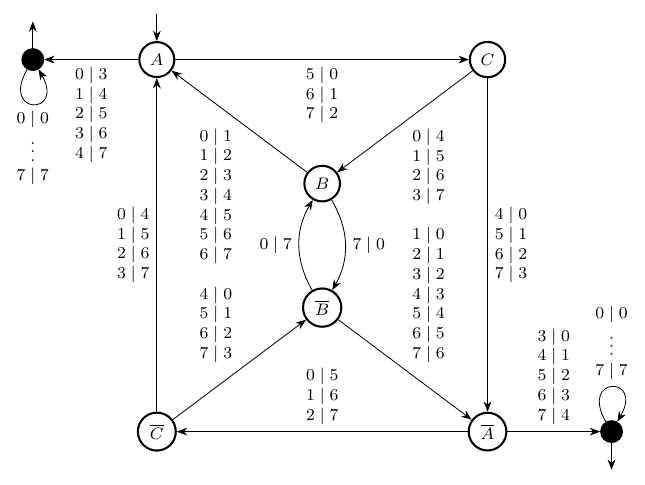}
		\caption{Transducer for addition by $(1,0)^\top$, where $P = \begin{bmatrix*}[r] -2 & 6 \\ 1 & 1\end{bmatrix*}$ and $Q = \begin{bmatrix*}[r] 3 & 1 \\ 0 & -1 \end{bmatrix*}$.} 
		\label{fig:automaton11}
	\end{center}
\end{figure}

Suppose $v \in \Z^2$ has finite digit representation $\sigma = 0^\infty d_{\ell-1} \cdots d_1 d_0$, where $d_{\ell-1} \neq 0$, 
and that $\sigma$ is fed into the transducer at the state labeled $A$ or $\ol{A}$. 
If a terminal state is reached after all nonzero digits of $\sigma$ have been read, 
then all remaining digits are copied unchanged to the output, and hence a finite digit string is produced. 

If, on the other hand, the automaton is in a nonterminal state after the nonzero digits have been read, then the remaining input digits are all zero. 
It therefore suffices to examine the walks in the automaton whose input digits are all zero.
As can be seen from Figure \ref{fig:automaton11}, every such walk reaches a terminal state after at most three steps. 
Consequently, all remaining zeros are copied unchanged to the output, and a finite digit string is again produced.

Applying this argument to $v = (0,0)^\top$, it follows that $(n,0)^\top$ has a finite $P/Q$-expansion for every $n \in \Z$. 
On the other hand, the operations $\pm C$ correspond to adding the vector $\pm (d/\delta, -\varepsilon/\delta)^\top \in \Z^2$, 
and therefore every vector of the form $m(d/\delta, -\varepsilon/\delta)^\top$, where $m\in\Z$, also has a finite $P/Q$-expansion. 
Consequently, every integer vector of the form $(n,0)^\top + m(d/\delta, -\varepsilon/\delta)^\top$, where $m,n \in \Z$, has a finite $P/Q$-expansion. 
Since $\brc{(1,0)^\top, (d/\delta, -\varepsilon/\delta)^\top}$ forms a basis of the lattice $\Z^2$, every vector in $\Z^2$ has a finite $P/Q$-expansion over $\D$. 
It follows that the attractor of the digit system $(P,Q,\D)$ is $\brc{(0,0)^\top}$.
The same arguments apply to the automaton corresponding to the case $\alpha=1$. 

The case $\alpha < 1$ gives rise to a different automaton, shown in Figure \ref{fig:automaton12}.
The figure corresponds to $P = \begin{bmatrix*}[r] 0 & 5 \\ 1 & 1\end{bmatrix*}$ and $Q = \begin{bmatrix*}[r] 2 & 1 \\ 0 & -1 \end{bmatrix*}$. 
In this case, we have $Q^{-1}P = \begin{bmatrix*}[r] \frac{1}{2} & 3 \\ -1 & -1 \end{bmatrix*}$ and $\alpha = -\tr(Q^{-1}P) = \tfrac{1}{2} < 1$. 
As with the transducer in Figure \ref{fig:automaton11}, every walk in this automaton whose input digits are all zero reaches a terminal state, after at most four steps. 
Hence the preceding argument applies here as well, and it follows that the attractor of the digit system $(P,Q,\D)$ is also $\brc{(0,0)^\top}$.
This completes the proof of the first case of Theorem~\ref{thm:automaton}.

\begin{figure}[ht!]
	\begin{center}
		\includegraphics{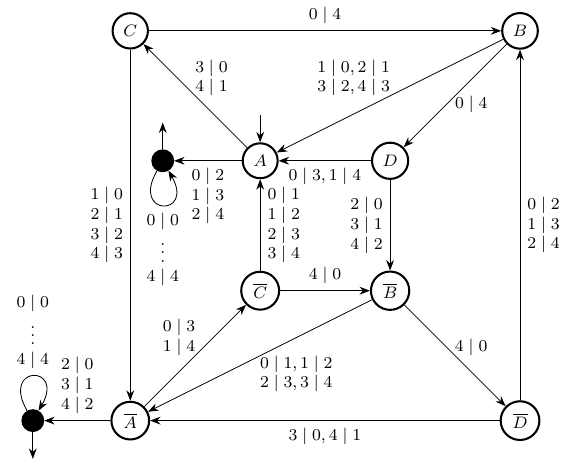}
	
		\caption{Transducer for addition by $(1,0)^\top$, where $P = \begin{bmatrix*}[r] 0 & 5 \\ 1 & 1\end{bmatrix*}$ and $Q = \begin{bmatrix*}[r] 2 & 1 \\ 0 & -1 \end{bmatrix*}$.} 
		\label{fig:automaton12}
	\end{center}
\end{figure}

We conclude this section by noting that the same arguments apply when $\alpha > 1$ and $\beta - 1 < \alpha < \beta$; the corresponding transducer is identical to that shown in Figure \ref{fig:automaton11}.
The case $\alpha \leq 1$ and $\beta - 1 < \alpha < \beta$, however, requires additional analysis, as new states may arise from $\pm D$.

\subsection{The case $0 < -\alpha \leq \beta-1$} \label{subsec:automaton2}
For this case, we have $p = r\beta \geq r - r\alpha >r$. The equations relating the operations $\pm A, \pm B, \pm C, \pm D$ are as follows:

{\allowdisplaybreaks
	\begin{align*}
	    (\sigma\nu)^{\pm A} &=
	    \begin{cases}
	        \sigma \ol{\nu \pm r}, & 0 \leq \nu \pm r < p \\
	        \sigma^{\pm C} \, \ol{\nu \pm r \mp p}, & 0 \leq \nu \pm r \mp p < p
	    \end{cases} \\
	    (\sigma\nu)^{\pm B} &=
	    \begin{cases}
	        \sigma^{\pm A} \, \ol{\nu \pm r\alpha \mp r}, & 0 \leq \nu \pm r\alpha \mp r < p \\
	        \sigma^{\pm D} \, \ol{\nu \pm r\alpha \mp r \pm p}, & 0 \leq \nu \pm r\alpha \mp r \pm p < p
	    \end{cases} \\
	    (\sigma\nu)^{\pm C} &=
	    \begin{cases}
	        \sigma^{\mp A} \, \ol{\nu \mp r\alpha}, & 0 \leq \nu \mp r\alpha < p \\
	        \sigma^{\mp D} \, \ol{\nu \mp r\alpha \mp p}, & 0 \leq \nu \mp r\alpha \mp p < p
	    \end{cases} \\
	    (\sigma\nu)^D &=
	    \begin{cases}
	        \sigma^A \, \ol{\nu + r\alpha + r}, & -r\alpha - r \leq \nu < p - r\alpha - r \\
	        \sigma^{-B} \, \ol{\nu + r\alpha+r-p}, & p - r\alpha - r \leq \nu < p \\
	        \sigma^D \, \ol{\nu + r\alpha+r+p} & 0 \leq \nu < -r\alpha - r
	    \end{cases} \\
	    (\sigma\nu)^{-D} &=
	    \begin{cases}
	        \sigma^{-A} \, \ol{\nu - r\alpha - r}, & r\alpha + r \leq \nu < p + r\alpha + r \\
	        \sigma^{B} \, \ol{\nu - r\alpha-r+p}, & 0 \leq \nu < r\alpha + r \\
	        \sigma^{-D} \, \ol{\nu - r\alpha-r-p} & p + r\alpha + r \leq \nu < p.
	    \end{cases}
	\end{align*}
}

Similar to the first case, at most two of the three possible relations for $D$ and $-D$ can occur,
depending on whether $\alpha < -1$, $\alpha=-1$, or $\alpha>-1$. 
An example of the case where $\alpha < -1$ is shown in Figure \ref{fig:automaton21}. 
It realizes addition by $(1,0)^\top$ for $P = \begin{bmatrix*}[r] 3 & -4 \\ 1 & 1\end{bmatrix*}$ and $Q = \begin{bmatrix*}[r] 2 & 1 \\ 0 & 1 \end{bmatrix*}$. 
Observe that the states $B$ and $-B$ play no role in the automaton, since no paths lead to them. 

\begin{figure}[H]
	\begin{center}
		\includegraphics{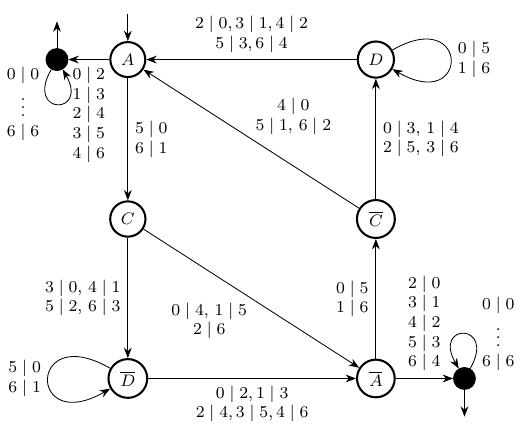}
	
		\caption{Transducer for addition by $(1,0)^\top$, where $P = \begin{bmatrix*}[r] 3 & -4 \\ 1 & 1\end{bmatrix*}$ and $Q = \begin{bmatrix*}[r] 2 & 1 \\ 0 & 1 \end{bmatrix*}$.}
		\label{fig:automaton21}
	\end{center}
\end{figure}

Let us first consider the case $\alpha < -1$. 
Of particular interest in the transition diagram of Figure~\ref{fig:automaton21} is the self-loop at state $D$ having input label $0$. 
This implies the existence of a finite string $\sigma$ for which $\sigma^A$ is infinite and left-periodic. 
The vectors represented by such infinite words are among the nonzero elements of the attractor $\A$ of the digit system $(P,Q,\D)$.

To determine the elements of $\A$ explicitly, let $\gamma := r\alpha + r + p$, which is the output digit corresponding to the input digit $0$ on the self-loop at state $D$.
Suppose $\sigma = 0^\infty \, d_{n-1} \cdots d_1 d_0$ is a finite input string. 
If $\sigma$ is fed into the automaton from any initial state, then one of the following occurs:
\begin{itemize}
	\item the resulting walk ends at a terminal state, in which case the output is finite, of the form $0^\infty u$ for some finite word $u$ over $\mathcal{D}$.
	\item the resulting walk ends at the self-loop at $D$, in which case the output is infinite and left-periodic, of the form $\gamma^\infty v$ for some finite word $v$ over $\mathcal{D}$.
\end{itemize}
Consequently, $(0^\infty)_{P/Q} = (0,0)^\top$ and $(\gamma^\infty)_{P/Q}$ are both elements of $\A$. 

Applying the same reasoning to an input string of the form $\gamma^\infty \, d_{n-1} \cdots d_1 d_0$, 
we obtain three possible output forms: $0^\infty u$, $\gamma^\infty v$, or $(\ol{2\gamma})^\infty w$ for some finite words $v,u,w$ over $\mathcal{D}$.

Let $K$ be the largest nonnegative integer satisfying $K\gamma \leq p-1$.
More generally, given $\ell \in \brc{0,1,\dots,K}$, an input string of the form $(\ol{\ell\gamma})^\infty d_{n-1} \cdots d_1d_0$ produces an output string of one of the following forms:
\begin{itemize}
    \item $(\ol{\ell\gamma})^\infty u$, for some finite word $u$ over $\mathcal{D}$, if the resulting walk ends at a terminal state;
    \item $(\ol{(\ell-1)\gamma})^\infty v$, for $\ell > 0$ and some finite word $v$ over $\mathcal{D}$, if the resulting walk ends at the self-loop at~$\overline{D}$;
    \item $(\ol{(\ell+1)\gamma})^\infty w$, for $\ell < K$ and some finite word $w$ over $\mathcal{D}$, if the resulting walk ends at the self-loop at~$D$.
\end{itemize}

Applying the automaton successively to $(0,0)^\top$ from the initial states $\pm A$ and $\pm C$ exhausts all vectors of $\Z^2$, as discussed in the first case. 
Hence, every element of the attractor $\A$ is of the form $\big((\ol{\ell\gamma})^\infty\big)_{P/Q}$, where $\ell \in \brc{0,1,\dots,K}$. 
To determine the vector represented by $\big((\ol{\ell\gamma})^\infty\big)_{P/Q}$, 
we apply the transducer successively $\ell$ times to $(0,0)^\top$, each time at state $D$. By definition, $\sigma^D$ performs addition by
\[
(Q^{-1}P)(1,0)^\top + (\alpha+1,0)^\top = \Big(1- \tfrac{d}{\delta}, \tfrac{\varepsilon}{\delta}\Big)^\top.
\]
Therefore, $\A = \big\{(0,0)^\top, \big(1- \tfrac{d}{\delta}, \tfrac{\varepsilon}{\delta}\big)^\top, \ldots, K\big(1- \tfrac{d}{\delta}, \tfrac{\varepsilon}{\delta}\big)^\top\big\}$. 
This completes the proof of the second statement of Theorem \ref{thm:automaton}.

The case $\alpha \geq -1$ gives rise to a different transducer, shown in Figure~\ref{fig:automaton22}. 
The figure corresponds to $P = \begin{bmatrix*}[r] 4 & -1 \\ 1 & 1\end{bmatrix*}$ and $Q = \begin{bmatrix*}[r] 2 & 5 \\ 0 & 1 \end{bmatrix*}$. 
The situation is similar to that of Figure \ref{fig:automaton12}: every walk in the transducer whose remaining input digits are all zero reaches a terminal state after at most five readings. 
Consequently, every finite input word produces a finite output word, and it follows that the attractor of the digit system $(P,Q,\mathcal{D})$ is $\A = \brc{(0,0)^\top}$. 
The same arguments apply to the automaton corresponding to the case $\alpha=-1$.
This proves the third statement of Theorem~\ref{thm:automaton}.
\begin{figure}[H]
	\begin{center}
		\includegraphics{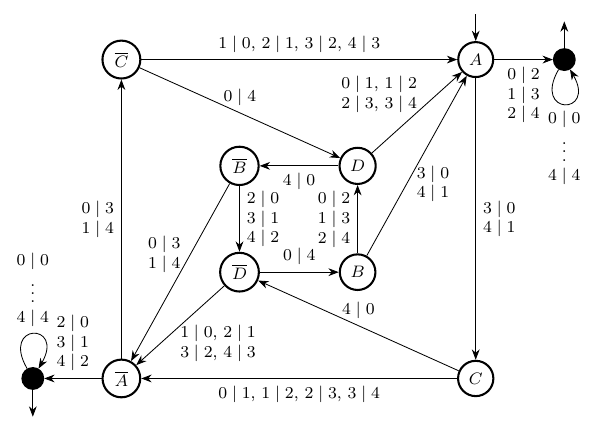}

		\caption{Transducer for addition by $(1,0)^\top$, where $P = \begin{bmatrix*}[r] 4 & -1 \\ 1 & 1\end{bmatrix*}$ and $Q = \begin{bmatrix*}[r] 2 & 5 \\ 0 & 1 \end{bmatrix*}$.}
		\label{fig:automaton22}
	\end{center}
\end{figure}

\subsection{The case $0 < -\alpha \leq -\beta-1$} \label{subsec:automaton3}
In this case, $p = -r\beta \geq r - r\alpha$. The relations between the states are as follows:

\begin{align*}
	(\sigma\nu)^{\pm A} &=
	\begin{cases}
		\sigma \ol{\nu \pm r}, & 0 \leq \nu \pm r < p \\
		\sigma^{\mp C} \, \ol{\nu \pm r \mp p}, & 0 \leq \nu \pm r \mp p < p,
	\end{cases} \\
	(\sigma\nu)^{\pm B} &=
	\begin{cases}
		\sigma^{\pm A} \, \ol{\nu \pm r\alpha \mp r}, & 0 \leq \nu \pm r\alpha \mp r < p \\
		\sigma^{\mp B} \, \ol{\nu \pm r\alpha \mp r \pm p}, & 0\leq \nu\pm r\alpha \mp r\pm p <p, 
	\end{cases} \\
	(\sigma\nu)^{\pm C} &=
	\begin{cases}
		\sigma^{\mp A} \, \ol{\nu \mp r\alpha}, & 0 \leq \nu \mp r\alpha < p\\
		\sigma^{\pm B} \, \ol{\nu \mp r\alpha \mp p}, & 0 \leq \nu \mp r\alpha \mp p  < p.
	\end{cases}
 \end{align*}

Observe that the operations $D$ and $-D$ play no role in the above relations, just as in Subsection \ref{subsec:automaton1} for the case $\alpha \geq 1$. 
In contrast to the previous subsections, the automaton for this case is independent of the value of $\alpha$. 
Figure \ref{fig:automaton3} shows the transducer realizing the addition by $(1,0)^\top$ for the digit system $(P,Q,\D)$ where $P = \begin{bmatrix*}[r] 2 & -1 \\ 1 & -3 \end{bmatrix*}$ and $Q = \begin{bmatrix*}[r] 3 & -8 \\ 0 & 1 \end{bmatrix*}$.

\begin{figure}[H]
	\begin{center}
		\includegraphics{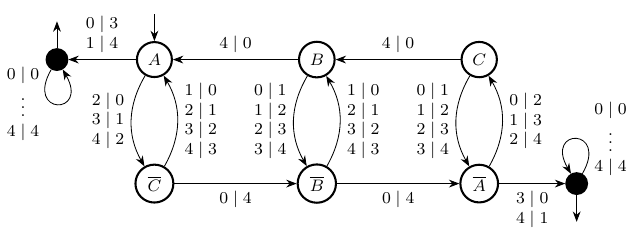}
		
		\caption{Transducer for addition by $(1,0)^\top$, where $P = \begin{bmatrix*}[r] 2 & -1 \\ 1 & -3 \end{bmatrix*}$ and $Q = \begin{bmatrix*}[r] 3 & -8 \\ 0 & 1 \end{bmatrix*}$.}
		\label{fig:automaton3}
	\end{center}
\end{figure} 

Suppose $v \in \Z^2$ has finite $P/Q$-expansion $\sigma = 0^\infty \, d_{\ell-1} \cdots d_1 d_0$. 
If $\sigma$ is fed into the automaton, the resulting walk either ends at a terminal state or eventually enters the cycle $\overline{A} \to C \to \overline{A}$. 
In the former case, the output is a finite string.
In the latter case, the output has one of the following forms: $\big(\ol{(p-r)} \, \ol{(-r\alpha)}\big)^\infty u$ 
or $\big(\ol{(-r\alpha)} \, \ol{(p-r)}\big)^\infty u$, where $u$ is a finite word over $\mathcal{D}$. 
Thus, $(0,0)^\top$, $\big(\big(\ol{(p-r)} \, \ol{(-r\alpha)}\big)^\infty\big)_{P/Q}$, and $\big(\big(\ol{(-r\alpha)} \, \ol{(p-r)}\big)^\infty\big)_{P/Q}$ are elements of the attractor $\A$.
The second and third vectors in this list are obtained by feeding the $P/Q$-expansion of $0$ into the automaton at the starting states $C$ and $A$, respectively. 
The operation $C$ corresponds to subtraction by $\big(-\tfrac{d}{\delta}, \tfrac{\varepsilon}{\delta}\big)^\top$, while the operation $-A$ corresponds to subtraction by $(1,0)^\top$. 
Thus, $\A$ contains
\[
\brc{(0,0)^\top, (-1,0)^\top, \big(\tfrac{d}{\delta}, -\tfrac{\varepsilon}{\delta}\big)^\top}.
\]
However, these are not the only elements of $\A$.
To see this, consider an input string of the form $\sigma=\big(\ol{(p-r)} \, \ol{(-r\alpha)}\big)^\infty u$ for some word $u$ over $\mathcal{D}$.
Depending on the values of $\alpha$ and $\beta$, the corresponding walk may end at a terminal state or eventually enter one of the two-cycles. 
If the walk ends at a terminal state, then the output has the same form as $\sigma$.
If the walk eventually enters one of the two-cycles, then the output is one of the following: a finite string, a string having the same form as $\sigma$, $(\overline{-r\alpha-r+p})^{\infty}$, 
$\big(\overline{(2p-2r+r\alpha)}\,\overline{(-2r\alpha+r-p)}\big)^{\infty}$, and $\big(\overline{(2p-2r)}\,\overline{(-2r\alpha)}\big)^{\infty}$. 

For example, consider Figure \ref{fig:automaton3}. 
The vectors $(0,0)^\top$, $(\ol{(p-r)} \, \ol{(-r\alpha)})_{P/Q}=(21)^{\infty}_{P/Q}$ and $(12)^{\infty}_{P/Q}$ are elements of $\mathcal{A}$.
Feeding the string $(21)^\infty0002$ into the automaton yields the output $(42)^\infty 2440$. 
Hence, $\big((42)^\infty\big)_{P/Q}$ is also an element of $\mathcal{A}$.

\subsection{The case $0 <\alpha \leq -\beta-1$} \label{subsec:automaton4}
In this case, $p = -r\beta \geq r + r\alpha$, and the relations between the states are as follows:

\begin{align*}
	(\sigma\nu)^{\pm A} &=
    \begin{cases}
        \sigma \ol{\nu \pm r}, & 0 \leq \nu \pm r < p \\
        \sigma^{\mp C} \, \ol{\nu \pm r \mp p}, & 0 \leq \nu \pm r \mp p < p,
    \end{cases} \\
    (\sigma\nu)^{\pm C} &=
    \begin{cases}
        \sigma^{\mp A} \, \ol{\nu \mp r\alpha}, & 0 \leq \nu \mp r\alpha < p \\
        \sigma^{\mp D} \, \ol{\nu \mp r\alpha \pm p}, & 0 \leq \nu \mp r\alpha \pm p < p,
    \end{cases} \\
    (\sigma\nu)^{\pm D} &=
    \begin{cases}
        \sigma^{\pm A} \, \ol{\nu \pm r\alpha \pm r}, & 0 \leq \nu \pm r\alpha \pm r < p \\
        \sigma^{\pm D} \, \ol{\nu \pm r\alpha \pm r \mp p}, & 0 \leq \nu \pm r\alpha \pm r \mp p < p.
    \end{cases}
\end{align*}

From these relations, we see that there are no incoming edges to the states $B$ and $\overline{B}$, regardless of the value of $\alpha$. 
Consequently, these states may be omitted from the corresponding transition diagram. 
An example of the automaton for this case is shown in Figure \ref{fig:automaton4}, 
corresponding to $P = \begin{bmatrix*}[r] -2 & -3 \\ -1 & 2 \end{bmatrix*}$ and $Q = \begin{bmatrix*}[r] 4 & -7 \\ 0 & 1 \end{bmatrix*}$.

\begin{figure}[H]
	\begin{center}
		\includegraphics{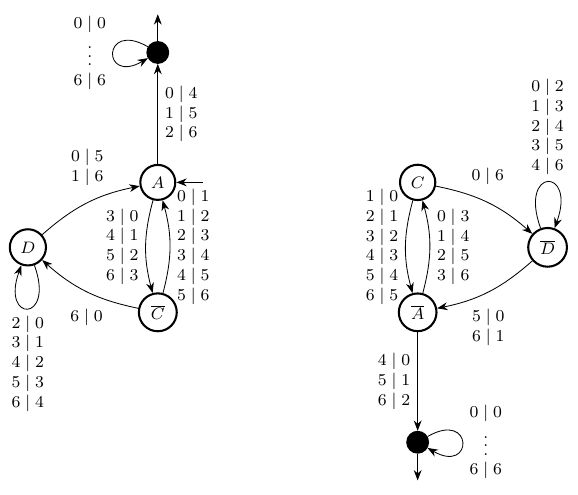}		
		
		\caption{Transducer for addition by $(1,0)^\top$, where $P = \begin{bmatrix*}[r] -2 & -3 \\ -1 & 2 \end{bmatrix*}$ and $Q = \begin{bmatrix*}[r] 4 & -7 \\ 0 & 1 \end{bmatrix*}$.}
		\label{fig:automaton4}
	\end{center}
\end{figure}

As in Figure \ref{fig:automaton21}, the transducer in Figure \ref{fig:automaton4} has a self-loop at the state $\overline{D}$ with input label $0$. 
In this case, the output digit on the self-loop is $\gamma := p-r\alpha-r$, and the proof proceeds in the same way as in Subsection~\ref{subsec:automaton2}. 
Let $K$ be the largest nonnegative integer satisfying $K\gamma \leq p-1$.
Then, for each $\ell \in \brc{0,1,\dots,K}$, the vector $\big(\ol{(\ell\gamma)}^\infty\big)_{P/Q}$ belongs to the attractor $\A$. 
Each of these vectors is obtained by feeding the $P/Q$-expansion of $(0,0)^\top$ into the automaton starting from the state $\overline{D}$ and repeating the process $\ell$ times. 
Because the operation $-D$ corresponds to subtracting the vector $\big(1- \frac{d}{\delta}, \frac{\varepsilon}{\delta}\big)^\top$, it follows that $\A$ contains
\[
\brc{(0,0)^\top, -\big(1- \tfrac{d}{\delta}, \tfrac{\varepsilon}{\delta}\big)^\top, \dots, -K\big(1- \tfrac{d}{\delta}, \tfrac{\varepsilon}{\delta}\big)^\top}.
\]
This finishes the proof of Theorem \ref{thm:automaton}.

As in Case 4 of Theorem \ref{thm:automaton}, $\A$ may contain vectors other than those obtained through the aforementioned process. 
This phenomenon occurs in the example shown in Figure \ref{fig:automaton4}, where $p-r-r\alpha = 2$. 
If the infinite word $2^\infty$ is fed to the transducer starting from the state $C$, the infinite periodic word $(51)^\infty$ is obtained. 
Hence, $\big((51)^\infty\big)_{P/Q}$ is another periodic point of $\A$, distinct from the vectors obtained above.

\section{The Expansion Tree of the Digit System $(P,Q,\D)$}

In this section, we study digit expansions of vectors in $\R^n$ with respect to the digit system $(P,Q,\D)$. 
Unlike digit expansions of vectors in $\Z^d[Q^{-1}P]$, the representation of a vector $x \in \R^n$ may be infinite and may involve negative powers of the base $Q^{-1}P$. 
The construction of such representations is similar to that used by Akiyama et al.~in \cite{frac-akiyama} for real numbers 
and Loquias et al.~in \cite{loquias} for the field of Laurent series over a finite field. 
Knuth also constructs an analogous tree in \cite{knuth} for the base $-\frac{3}{2}$, using the $2$-adic norm.

Although Proposition \ref{prop:digit_system} shows that the digit system $(P,Q,\D)$ coincides with the system $(Q^{-1}P, Q^{-1}\D)$ studied in \cite{jankauskas,jankauskas-ii}, 
the expansion tree developed here is different from that considered by Knuth in \cite{knuth}. 
It is therefore worthwhile to study the expansion tree associated with the digit system $(P,Q,\D)$, with the ultimate goal of defining digit expansions for vectors in $\R^n$.

\subsection{Introduction} Given coprime integers $p$ and $q$ with $p > q \geq 2$ and $\D = \brc{0,1,\dots,p-1}$, Akiyama et al.~\cite{frac-akiyama} define for each $d \in \D$ the partial function $\tau_d : \N \dashrightarrow \N$ as follows:
\[
\tau_d(n) :=
\begin{cases}
	\frac{1}{q}(pn+d), &\text{ if } q \text{ divides } pn+d \\
	\text{undefined}, &\text{ otherwise.}
\end{cases}
\]
The collection of \textit{admissible digits} of $n \in \N$ is $\d(n) = \brc{d \in \D \mid \tau_d(n) \text{ is defined}}$. 
Using these definitions, the tree $T_{p/q}$ is defined as follows.
Its edges are labeled by elements of $\D$, and its nodes are labeled by elements of $\N$. 
The root node is labeled by $0$. 
The children of a node labeled by $n \in \N$ are labeled by $\tau_d(n)$ where $d \in \d(n)$, and the edge from $n$ to $\tau_d(n)$ is labeled by $d$. 
For a node $s$ of $T_{p/q}$, the \textit{path label} of $s$ is the label of the unique path from the root to $s$. 
The subtree $I_{p/q}$ of $T_{p/q}$ consists of all nodes whose path labels do not begin with $0$.

The tree $I_{p/q}$ encodes the $p/q$-expansions of the positive integers. 
If a node $s\in I_{p/q}$ is labeled by $n$, then the path label of $s$ is the $p/q$-expansion of $n$~\cite[Lemma 18]{frac-akiyama}.

The tree $T_{p/q}$ also plays a fundamental role in defining the $p/q$-expansions of real numbers. 
To this end, we first define the numerical value of an infinite word $w = d_1 d_2 d_3 \cdots \in \D^{\N}$ by
\[
\pi(\bdot w) = \pi(\bdot d_1 d_2 d_3 \cdots) := \sum_{i=1}^\infty \dfrac{d_i}{q} \pren{\dfrac{p}{q}}^{-i},
\]
where each digit to the right of the radix point is multiplied by the corresponding negative power of the base $p/q$. 
Let $W_{p/q}$ denote the set of labels of all infinite paths in $T_{p/q}$ starting at the root node. 
If $w \in W_{p/q}$ satisfies $\pi(\bdot w) = x$, then $w$ is said to be the \textit{$p/q$-expansion} of $x\in\R$. 
If $t_{p/q}$ denotes the lexicographically largest element of $W_{p/q}$, 
then the real numbers admitting a $p/q$-expansion form exactly the closed interval $\big[0,\pi\big(t_{p/q}\big)\big]$ \cite[Theorem 2]{frac-akiyama}.

\subsection{Expansion tree in matrix digit systems} 
We now introduce an analogue of the tree $T_{p/q}$ for the matrix digit system $(P,Q,\D)$ with zero attractor. 
We focus on expansions of integer vectors. 
Recall from Lemma~\ref{lem:attractor} that every $v \in \Z^d$ has a unique expansion of the form
\[
v = \sum_{j=0}^k (Q^{-1}P)^j Q^{-1}d_j
\]
where each $d_j \in \mathcal{D}\subseteq \Z^d$ with $d_k \neq 0$. 
Let $\Ell_{P/Q}$ be the subset of $\D^*$ consisting of all expansions of vectors in $\Z^d$ in base $Q^{-1}P$. 
Explicitly, \[\Ell_{P/Q} = \Big\{d_k \cdots d_1 d_0 \mid k \in \N\cup \brc{0}, d_k \neq 0, \text{ and } \sum_{j=0}^k (Q^{-1}P)^j Q^{-1}d_j \in \Z^d\Big\}.\]
To construct an analogue of the tree $T_{p/q}$, we first establish that $\Ell_{P/Q}$ is \textit{prefix-closed}; that is, 
every prefix of a word in $\Ell_{P/Q}$ also belongs to $\Ell_{P/Q}$. 
This implies that $\Ell_{P/Q}$ can be identified with a subtree of the full $\abs{\D}$-ary tree. 
The following result establishes this property.

\begin{proposition}
The language $\Ell_{P/Q}$ is prefix-closed.
\end{proposition}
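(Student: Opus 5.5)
Words are written most-significant digit first, so a word $d_k\cdots d_1d_0$ has prefixes $d_k\cdots d_j$ for $0\le j\le k$. Each such prefix still has nonzero leading digit $d_k$. So the only thing to check is that the vector it represents lies in $\Z^d$. Write $A:=Q^{-1}P$ and $v=\sum_{i=0}^k A^iQ^{-1}d_i\in\Z^d$. By induction it suffices to treat the prefix obtained by deleting the last digit $d_0$, namely the word $d_k\cdots d_1$. It represents
\[
w=\sum_{i=1}^{k}A^{i-1}Q^{-1}d_i=A^{-1}\big(v-Q^{-1}d_0\big)=P^{-1}\big(Qv-d_0\big).
\]
Repeating this step removes the digits $d_0,d_1,\dots,d_{j-1}$ one at a time.

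The key claim is that $w\in\Z^d$. I would get this from the restricted dynamical system of Definition~\ref{def:restrict}, and show that $w=\Phi_r(v)$. By Lemma~\ref{lem:attractor}, in the zero-attractor setting the expansion of $v\in\Z^d$ is the unique one produced by iterating $\Phi_r$. Its lowest digit is therefore $d_0=\d_r(Qv)$, which satisfies $Qv-d_0\in P\Z^d$. Hence $w=P^{-1}(Qv-d_0)=\Phi_r(v)\in\Z^d$. The uniqueness comes from $\D$ being a complete residue system of $\Z^d/P\Z^d$, together with Lemma~\ref{lem:subgp}: minimal expansions with digits from a complete residue system are unique, so the digits in the word are exactly $\d_r(Q\Phi_r^i(v))$.

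The main obstacle is this identification of an arbitrary word of $\Ell_{P/Q}$ with the $\Phi_r$-generated expansion. By definition, a word lies in $\Ell_{P/Q}$ as soon as its value is an integer vector; nothing a priori says it was produced by the algorithm. I would settle this by a direct congruence argument that avoids the uniqueness issue. Suppose $\sum_{i=0}^kA^iQ^{-1}d_i=v\in\Z^d$. Multiplying by $Q$ gives $Qv-d_0=P\,w$ with $w\in\Z^d[Q^{-1}P]$, so $Qv-d_0\in\Z^d\cap P\Z^d[Q^{-1}P]$. It remains to show that this lies in $P\Z^d$. This is exactly where the coprimality of the determinants enters. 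By the proof of Lemma~\ref{lem:subgp}, the natural map $\Z^d/P\Z^d\to(\Z^d+P\Z^d[Q^{-1}P])/P\Z^d[Q^{-1}P]$ is a surjection between groups of the same order $\abs{\det P}$, hence injective. Therefore $\Z^d\cap P\Z^d[Q^{-1}P]=P\Z^d$, so $Qv-d_0\in P\Z^d$ and $w\in\Z^d$.

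Iterating this argument $j$ times shows that every prefix $d_k\cdots d_j$ represents an integer vector with nonzero leading digit, so it lies in $\Ell_{P/Q}$. This proves that $\Ell_{P/Q}$ is prefix-closed. The empty prefix corresponds to $0$ and is the root of the tree.
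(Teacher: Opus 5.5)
Your proof is correct and has the same skeleton as the paper's: the prefix $d_k\cdots d_1$ has value $P^{-1}(Qv-d_0)$, and one shows this value is integral. Where you differ is in how the key step is justified.

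The paper simply recalls that $d_j=\d_r\bigl(Q\Phi_r^j(v)\bigr)$. That is, it takes for granted that every word of $\Ell_{P/Q}$ is the $\Phi_r$-generated expansion of its value, so that the prefix value is $\Phi_r(v)\in\Z^d$. This is exactly the identification you flag as the obstacle, and Lemma~\ref{lem:attractor} only gives existence of expansions, not uniqueness.

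Your congruence argument instead proves $\Z^d\cap P\Z^d[Q^{-1}P]=P\Z^d$, via injectivity of $\Z^d/P\Z^d\to(\Z^d+P\Z^d[Q^{-1}P])/P\Z^d[Q^{-1}P]$ from the order count in Lemma~\ref{lem:subgp}. This buys several things: it applies to an arbitrary word with integral value, it never uses the zero-attractor assumption, and it actually establishes the uniqueness the paper invokes.

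The price is that $\gcd(\det P,\det Q)=1$ becomes an explicit hypothesis. Section~4 never states it, though its examples satisfy it, and it is not cosmetic: coprimality of $P,Q$ plus expansiveness does not suffice. Take $P=\begin{bmatrix}2&1\\-4&2\end{bmatrix}$ and $Q=\begin{bmatrix}2&0\\0&1\end{bmatrix}$. These are coprime, $\det P=8$, $\det Q=2$, and $Q^{-1}P$ has eigenvalues of modulus $2$. For any $\D$ containing $(1,0)^\top$, the word $(1,0)^\top(1,0)^\top$ has value $(1,-2)^\top$, while its prefix has value $(1/2,0)^\top$.

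One step you leave implicit is $w\in\Z^d[Q^{-1}P]$. It follows from $Q^{-1}\Z^d=Q^{-1}(P\Z^d+Q\Z^d)\subseteq\Z^d[Q^{-1}P]$. Alternatively, you can bypass both this step and Lemma~\ref{lem:subgp}: $(\det P)\,w=(\det P)P^{-1}(Qv-d_0)$ and $(\det Q)^k w$ are both integral, and coprime determinants then force $w\in\Z^d$.
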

\begin{proof}
Let $w=d_k \cdots d_1 d_0 \in \Ell_{P/Q}$. 
We show that its prefix $d_k \cdots d_1$ also belongs to $\Ell_{P/Q}$. 
Let $v \in \Z^d$ such that \[v = (w)_{P/Q} = \sum_{j=0}^k (Q^{-1}P)^j Q^{-1}d_j.\] 
Recall that $d_j = \d_r\big(Q\Phi_r^j(v)\big)$ for each $0\leq j\leq k$. 
Hence $v-Q^{-1}d_0 = \sum\limits_{j=1}^k (Q^{-1}P)^jQ^{-1}d_j$, and we obtain
\[
	(d_k \cdots d_1)_{P/Q} = \sum_{j=1}^k (Q^{-1}P)^{j-1} Q^{-1}d_j 
	= P^{-1}Q(v-Q^{-1}d_0) 
	= P^{-1}\big(Qv - \d_r(Qv)\big)=\Phi_r(v) \in \Z^d.
\]
Thus, $d_k \cdots d_1 \in \Ell_{P/Q}$. 
\end{proof}
Let $(P,Q,\D)$ be a matrix digit system with zero attractor. 
For each $d \in \D$, define the mapping $\tau_d : \Z^d \to \Z^d[Q^{-1}P]$ by $\tau_d(v) = Q^{-1}(Pv + d)$. 
We define the \textit{expansion tree} $T_{P/Q}$ to be the edge-labeled directed graph with vertex set $\Z^d$ such that, for $v,w \in \Z^d$, 
there is a directed edge from $v$ to $w$ with label $d \in \D$ if and only if $w = \tau_d(v)$. 

For example, consider the matrices $P = \begin{bmatrix*}[r]	4 & -1 \\ 1 & 1 \end{bmatrix*}$ and $Q = \begin{bmatrix*}[r] 2 & 5 \\ 0 & 1 \end{bmatrix*}$, 
together with the digit set $\D = \brc{(0,0)^\top, \dots, (4,0)^\top}$. 
By Theorem \ref{thm:automaton}, $(P,Q,\D)$ is a matrix digit system with zero attractor. 
The first three levels of the tree $T_{P/Q}$ are shown below. 
Here, we identify the edge label $(\nu,0)^\top$ with $\nu$.

\begin{figure}[H]
\begin{center}
	\includegraphics{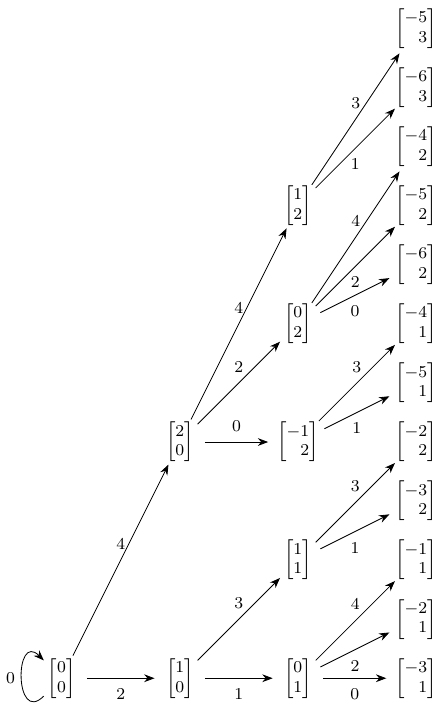}
	\caption{The first three levels of the expansion tree $T_{P/Q}$, where $P = \begin{bmatrix*}[r]	4 & -1 \\ 1 & 1 \end{bmatrix*}$ and $Q = \begin{bmatrix*}[r] 2 & 5 \\ 0 & 1 \end{bmatrix*}$.}
	\label{fig:tree_PQ}
\end{center}
\end{figure}

The label of the unique path from the root to a node $s$ in $T_{P/Q}$ is called the \textit{path label} of $s$, and is denoted by $p(s)$. 
For instance, the node labeled $(-5,3)^\top$ in Figure \ref{fig:tree_PQ} has path label $443$. 
The following proposition shows that the path label of a node in $T_{P/Q}$ is its $P/Q$-expansion.

\begin{proposition} \label{prop:path_label}
Every node $s$ in the tree $T_{P/Q}$ is labeled by the vector $\big(p(s)\big)_{P/Q}$.
\end{proposition}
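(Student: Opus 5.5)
The plan is to induct on the depth of the node $s$ in $T_{P/Q}$, that is, on the length of its path label $p(s)$. The one identity we need is a recursive description of the value map $(\cdot)_{P/Q}$ when a digit is appended on the right. For a word $w = d_k \cdots d_1 d_0$ and a digit $d \in \D$, we have
\[
(w\,d)_{P/Q} = \sum_{j=0}^{k} (Q^{-1}P)^{j+1} Q^{-1} d_j + Q^{-1}d = (Q^{-1}P)(w)_{P/Q} + Q^{-1}d .
\]
This follows directly from the definition in (\ref{eq:pq_mat_exp}) by shifting the index. The right-hand side can be rewritten as $Q^{-1}\big(P(w)_{P/Q} + d\big) = \tau_d\big((w)_{P/Q}\big)$. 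So appending the digit $d$ on the right corresponds exactly to applying $\tau_d$.

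For the base case, the root of $T_{P/Q}$ is labeled $(0,0)^\top$ (the vertex $0 \in \Z^d$). Its path label is the empty word, whose value is the empty sum $0$. So the claim holds at depth zero.

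For the inductive step, suppose $s'$ is a child of $s$, joined by an edge labeled $d$. Then the label of $s'$ is $\tau_d(v)$, where $v$ is the label of $s$, and the path label satisfies $p(s') = p(s)\,d$. Here the edge label is appended as the new least significant (rightmost) digit, consistent with the example in Figure~\ref{fig:tree_PQ}, where reading $443$ from the root gives the digits $d_2 d_1 d_0$ in that order. By the induction hypothesis $v = (p(s))_{P/Q}$, and the identity above gives
\[
(p(s'))_{P/Q} = \tau_d\big((p(s))_{P/Q}\big) = \tau_d(v),
\]
which is the label of $s'$.

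The main point needing care is the bookkeeping of conventions rather than any computation. We need to confirm that path labels are written so that the first edge read from the root is the most significant digit. We also need to confirm that the tree is understood as the part reachable from the root via edges $w = \tau_d(v)$ with $w \in \Z^d$, so that every node has a unique path from the root. Uniqueness of that path follows because $v = P^{-1}(Qw - d)$ is determined by $w$ and $d$, and $d$ is determined by $w$, since $Qw \equiv d \pmod{P\Z^d}$ and $\D$ is a complete residue system. With these conventions fixed, the induction above completes the proof.
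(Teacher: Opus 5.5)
Your proof is correct and follows the paper's own argument: both induct on the length of the path label, using the fact that appending a digit $d$ on the right corresponds to applying $\tau_d$, i.e.\ $(wd)_{P/Q} = Q^{-1}\big(P(w)_{P/Q}+d\big)$; the paper starts the induction at length one rather than zero. One small aside: your uniqueness-of-path remark overlooks that $\tau_0(0)=0$ gives a loop at the root, so paths are unique only up to leading zeros, but this does not matter, because leading zeros do not change $(\cdot)_{P/Q}$ and your induction works along any walk from the root.
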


\begin{proof}
Suppose $p(s) = d_{k-1} \cdots d_1 d_0 \in \D^*$. Then in the tree $T_{P/Q}$,
\[
0 \xrightarrow{d_{k-1}} \tau_{d_{k-1}}(0) \xrightarrow{d_{k-2}} (\tau_{d_{k-2}}\tau_{d_{k-1}})(0) \xrightarrow{d_{k-3}} \cdots \xrightarrow{d_1} (\tau_{d_1} \cdots \tau_{d_{k-1}})(0) \xrightarrow{d_0} (\tau_{d_0} \tau_{d_1} \cdots \tau_{d_{k-1}})(0).
\]
Hence, it suffices to show that $(\tau_{d_0} \tau_{d_1} \cdots \tau_{d_{k-1}})(0) = \big(p(s)\big)_{P/Q}$. 

We proceed by induction on $k$. If $k=1$, then $p(s) = d_0$ and $\tau_{d_0}(0) = Q^{-1}d_0 = (d_0)_{P/Q} = \big(p(s)\big)_{P/Q}$.  
	
Assume the statement holds for all path labels of length $r$. Let $s$ be a node in $T_{P/Q}$ with path label $d_r \cdots d_1d_0$. Then
\begin{align*}
	(\tau_{d_0} \tau_{d_1} \cdots \tau_{d_r})(0) &= \tau_{d_0}\big((\tau_{d_1} \cdots \tau_{d_r})(0)\big) \\
	&= \tau_{d_0} \big((d_r \cdots d_1)_{P/Q}\big) \\
	&= Q^{-1}\Big({P\sum_{i=1}^r (Q^{-1}P)^{i-1}Q^{-1}d_i + d_0}\Big) \\
	&= \sum_{i=1}^r (Q^{-1}P)^iQ^{-1}d_i + Q^{-1}d_0 = \sum_{i=0}^r (Q^{-1}P)^iQ^{-1}d_i \\
	&= (d_r \cdots d_1d_0)_{P/Q} = \big(p(s)\big)_{P/Q}.\qedhere
\end{align*}
\end{proof}

As a consequence of Proposition \ref{prop:path_label}, the language $\Ell_{P/Q}$ consists precisely of the path labels of the nodes in $T_{P/Q}$ whose leftmost digit is nonzero.

The remainder of this section is devoted to establishing automatic and combinatorial properties of the language $\Ell_{P/Q}$. 
We have already shown that $\Ell_{P/Q}$ is prefix-closed. We next prove that it is not regular; that is, no finite automaton recognizes $\Ell_{P/Q}$. 
For $u \in \Ell_{P/Q}$ and $k \in \N$, define
\[
R_k(u) = \brc{w \in \D^{\leq k} \mid uw \in \Ell_{P/Q}}.
\]
We begin with the following lemma.

\begin{lemma}\label{lem:rightcont}
Suppose $u,v \in \Ell_{P/Q}$ with $x = (u)_{P/Q}$ and $y = (v)_{P/Q}$. A word $w \in R_k(u)$ of length $k$ also belongs to $R_k(v)$ only if $x \equiv y \pmod{(P^{-1}Q)^k \Z^d}$.
\end{lemma}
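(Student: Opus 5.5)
We compute the value of a concatenated word directly from the positional definition. If $u = d_m \cdots d_0$ and $w = e_{k-1}\cdots e_0$ is a word of length exactly $k$, then the digits of $u$ in $uw$ are shifted up by $k$ positions. Hence
\[
(uw)_{P/Q} = (Q^{-1}P)^k (u)_{P/Q} + (w)_{P/Q} = (Q^{-1}P)^k x + (w)_{P/Q},
\]
and likewise $(vw)_{P/Q} = (Q^{-1}P)^k y + (w)_{P/Q}$. This identity holds for any words and does not need $w$ to start with a nonzero digit. It follows by splitting the sum $\sum_{j}(Q^{-1}P)^jQ^{-1}d_j$ of $uw$ into the indices $j<k$ and $j\ge k$, exactly as in the induction step of Proposition~\ref{prop:path_label}.

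Now suppose $w \in R_k(u)\cap R_k(v)$ has length $k$. Then both $(uw)_{P/Q}$ and $(vw)_{P/Q}$ lie in $\Z^d$ by the definition of $\Ell_{P/Q}$. The leading digit of $uw$ is that of $u$, which is nonzero, so no issue arises there. Subtracting the two displayed identities, the common term $(w)_{P/Q}$ cancels and we get
\[
(Q^{-1}P)^k (x-y) \in \Z^d .
\]
Since $(Q^{-1}P)^{-1} = P^{-1}Q$, this means $x - y \in (P^{-1}Q)^k\Z^d$, which is the claimed congruence $x\equiv y \pmod{(P^{-1}Q)^k\Z^d}$.

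The only point requiring care is the bookkeeping in the concatenation identity. The exponent shift must be exactly $k$, which is why $w$ is taken to have length precisely $k$ rather than at most $k$. One also has to make sure that the extra factor $Q^{-1}$ attached to each digit does not interfere. It does not, because $Q^{-1}$ sits to the right of the power of $Q^{-1}P$ in every term, so $\sum_{j\ge k}(Q^{-1}P)^jQ^{-1}d_j = (Q^{-1}P)^k\sum_{i}(Q^{-1}P)^iQ^{-1}d_{i+k}$. Apart from this, the argument is a direct computation, and I expect no real obstacle.
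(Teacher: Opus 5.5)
Your proposal is correct and follows essentially the same argument as the paper. The paper also writes $(uw)_{P/Q} = (w)_{P/Q} + (Q^{-1}P)^k x$ and uses $uw, vw \in \Ell_{P/Q}$ to put both values in $\Z^d$. Subtracting then gives $(Q^{-1}P)^k(x-y)\in\Z^d$, i.e.\ $x-y\in(P^{-1}Q)^k\Z^d$.
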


\begin{proof}
Let $w = w_{k-1} \cdots w_1 w_0 \in \D^k$ and $u = d_{r-1} \cdots d_1 d_0$. Then
\begin{align*}
	w \in R_k(u) &\iff uw = d_{r-1} \cdots d_1 d_0 w_{k-1} \cdots w_1 w_0 \in \Ell_{P/Q} \\
	&\iff \sum_{i=0}^{k-1} (Q^{-1}P)^i Q^{-1}w_i + \sum_{j=0}^{r-1} (Q^{-1}P)^{j+k} Q^{-1} d_j \in \Z^d \\
	&\iff (w)_{P/Q} + (Q^{-1}P)^k x \in \Z^d.
\end{align*}
By the same argument, $w \in R_k(v)$ if and only if $(w)_{P/Q} + (Q^{-1}P)^k y \in \Z^d$. 
Hence, if $w \in R_k(u) \cap R_k(v)$, then $(Q^{-1}P)^k(x-y)\in\Z^d$, or equivalently, $x-y \in (P^{-1}Q)^k \Z^d$.
Therefore, $x \equiv y \pmod{(P^{-1}Q)^k \Z^d}$ as claimed.
\end{proof}

Using Lemma \ref{lem:rightcont} together with the \textit{Myhill-Nerode Theorem} (see \cite[Theorem 4.1.8]{allouche}), 
we now prove that the language $\Ell_{P/Q}$ is not regular.

\begin{theorem} \label{cor:nonreg}
The language $\Ell_{P/Q}$ is not regular.
\end{theorem}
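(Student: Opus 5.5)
We argue by contradiction via the Myhill--Nerode Theorem. Suppose $\Ell_{P/Q}$ is regular. Then it has only finitely many Myhill--Nerode classes, say $N$ of them. The right quotients $u^{-1}\Ell_{P/Q}=\brc{w \mid uw\in\Ell_{P/Q}}$ therefore take at most $N$ distinct values as $u$ ranges over $\Ell_{P/Q}$. In particular, for each fixed $k$, the sets $R_k(u)$ take at most $N$ values. The plan is to produce, for a suitable $k$, more than $N$ words $u\in\Ell_{P/Q}$ whose sets $R_k(u)$ are pairwise distinct.

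\textbf{Step 1: each $R_k(u)$ contains a word of length exactly $k$.} Since the attractor is zero, $\Phi_r$ eventually reaches $0$, so every $v\in\Z^d$ has a finite expansion. Given $u\in\Ell_{P/Q}$ with $x=(u)_{P/Q}$, we look for $w\in\D^k$ with $(w)_{P/Q}+(Q^{-1}P)^kx\in\Z^d$. This is a single step of backward reconstruction in the tree $T_{P/Q}$: starting at the node $x$, we apply $\tau_{d}$ with $d\in\D$ chosen so that $Q^{-1}(Px+d)\in\Z^d$, and then repeat.

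Such a $d$ exists by a residue count. Since $\det P$ and $\det Q$ are coprime, multiplication by $P$ is a bijection on $\Z^d/Q\Z^d$. Moreover $\D$ is a complete residue system modulo $P\Z^d$, and we need it to meet every class modulo $Q\Z^d$. This holds in the concrete two-dimensional setting, since there $\D=\brc{(\nu,0)^\top}$ with $p>r$ and $Q\Z^2$ has index $r$. The general case needs a separate argument. Alternatively, we can simply take $w$ to be the last $k$ digits of some long word $uw\in\Ell_{P/Q}$; such words exist because the tree below any node is infinite, the children $\tau_d$ being determined as above.

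\textbf{Step 2: choose $u$'s with pairwise incongruent values modulo $(P^{-1}Q)^k\Z^d$.} Because $Q^{-1}P$ is expanding, $|\det(Q^{-1}P)|>1$. Hence the index $[\Z^d : \Z^d\cap (P^{-1}Q)^k\Z^d]$ grows without bound in $k$. Note that $(P^{-1}Q)^k\Z^d$ is not contained in $\Z^d$, so we work with $\Z^d/(\Z^d\cap(P^{-1}Q)^k\Z^d)$, whose order tends to infinity by the same coprimality of determinants. Fix $k$ large enough that this order exceeds $N$. Then pick $N+1$ integer vectors $x_0,\dots,x_N$ that are pairwise incongruent modulo $(P^{-1}Q)^k\Z^d$, and let $u_i$ be their expansions, which lie in $\Ell_{P/Q}$ because the attractor is zero.

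\textbf{Step 3: distinguish the $R_k(u_i)$.} By Step 1, each $R_k(u_i)$ contains some word $w_i$ of length $k$. By Lemma \ref{lem:rightcont}, $w_i\notin R_k(u_j)$ for $j\neq i$, since $x_i\not\equiv x_j$. So the $N+1$ sets $R_k(u_i)$ are pairwise distinct. Sets $R_k(u)$ that differ force different Myhill--Nerode classes, so there are at least $N+1$ classes, a contradiction.

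The main obstacle is Step 1, guaranteeing that every node of $T_{P/Q}$ has a child, i.e.\ that some digit makes $Q^{-1}(Pv+d)$ integral. This is where the coprimality hypothesis and the choice of $\D$ are used, and I would first verify it in the setting of Theorem \ref{thm:automaton}. The growth of the index in Step 2 also needs a short determinant computation.
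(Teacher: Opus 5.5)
Your argument is sound in the setting of Theorem~\ref{thm:automaton}, once Step~2 is repaired as explained in the second paragraph. It is organized differently from the paper's proof. The paper argues pairwise. For distinct $u,v\in\Ell_{P/Q}$ with values $x\neq y$, it takes the least $k$ with $x\not\equiv y\pmod{(P^{-1}Q)^k\Z^d}$, applies Lemma~\ref{lem:rightcont}, and concludes that any two distinct words of $\Ell_{P/Q}$ are Myhill--Nerode inequivalent. You count classes instead. That needs only that the index of $M_k:=\Z^d\cap(P^{-1}Q)^k\Z^d$ in $\Z^d$ is unbounded, together with your Step~1.

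Your route buys two things.

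First, Step~1 is exactly what is needed to pass from Lemma~\ref{lem:rightcont} (no word of length $k$ lies in \emph{both} $R_k(u)$ and $R_k(v)$) to $R_k(u)\neq R_k(v)$. The paper uses this silently, and it can fail when $\D$ misses a coset of $Q\Z^d$, since some nodes are then leaves. You verify it correctly via $\D+Q\Z^2=\Z^2$. Note that "$Q\Z^2$ has index $r$" is not enough by itself. What makes it work is that $\Z^2/Q\Z^2$ is cyclic generated by $\e_1$ (because $Q\e_2=(s,\delta)^\top$ with $\delta=\pm1$), together with $p>r$.

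Second, the paper's pairwise claim is false in general. Take $P=\begin{bmatrix}-9&-6\\1&-1\end{bmatrix}$ and $Q=\diag(2,1)$, which falls under case~1 of Theorem~\ref{thm:automaton}. The vector $z=(2,-1)^\top$ satisfies $Q^{-1}Pz=-3z$, so $z\in M_k$ for all $k$. No separating $k$ exists, and the expansions of $z$ and $2z$ are distinct words of $\Ell_{P/Q}$ with the same right quotient. Your pigeonhole argument is unaffected, since $[\Z^2:M_k]=2^k$ there.

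The repair concerns Step~2: expansiveness is not why the index grows. For $Q=I$ one has $M_k=\Z^d$ for all $k$, and $\Ell_{P/Q}=(\D\setminus\{0\})\D^*$ is then regular. So the hypothesis $\abs{\det Q}\geq 2$ is essential, and the paper needs it tacitly as well. The correct computation is local at each prime $\ell$:
\begin{itemize}
\item if $\ell\nmid\det Q$, then $Q^{-1}P$ is $\ell$-integral and $M_k$ is everything at $\ell$;
\item if $\ell\mid\det Q$, then $\ell\nmid\det P$, and $M_k$ is $(P^{-1}Q)^k\Z_\ell^d$ at $\ell$.
\end{itemize}
Hence $[\Z^d:M_k]=\abs{\det Q}^k$.

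Two smaller fixes. Drop the remark that $P$ permutes $\Z^d/Q\Z^d$: $P$ need not preserve $Q\Z^d$, and only $\D+Q\Z^d=\Z^d$ is used. Also take the $x_i$ nonzero, so that $u_i\in\Ell_{P/Q}$.
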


\begin{proof}
Let $u,v \in \Ell_{P/Q}$ with $u \neq v$, $x = (u)_{P/Q}$, and $y = (v)_{P/Q}$. 
By the uniqueness of $Q^{-1}P$-expansions, $x \neq y$.
Hence, there exists a smallest $k \in \N$ such that $x \not\equiv y \pmod{(P^{-1}Q)^k\Z^d}$. 
By Lemma~\ref{lem:rightcont}, $R_k(u) \neq R_k(v)$. 
Thus, there is a word $w$ of length $k$ such that exactly one of $uw$ and $vw$ belongs to $\Ell_{P/Q}$.
Therefore, $u$ and $v$ lie in distinct Myhill–Nerode equivalence classes.
Since $\Ell_{P/Q}$ is infinite, it has infinitely many Myhill–Nerode equivalence classes. 
By the Myhill–Nerode Theorem, $\Ell_{P/Q}$ is not regular.
\end{proof}

Next, we consider the language $W_{P/Q}$ consisting of labels of all infinite paths in the expansion tree~$T_{P/Q}$. 
A notable property of this language is that its only eventually periodic word is the infinite word $0^\omega$. 
We have the following result.

\begin{proposition} \label{prop:periodic}
The only eventually periodic word in $W_{P/Q}$ is $0^\omega$.
\end{proposition}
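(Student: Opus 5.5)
An infinite path in $T_{P/Q}$ starting at the root reads its labels as $w = d_1 d_2 d_3 \cdots$. The first edge goes from the root $0$ to $v_1 = \tau_{d_1}(0)$, and the $n$th edge goes from $v_{n-1}$ to $v_n = \tau_{d_n}(v_{n-1})$. Every $v_n$ lies in $\Z^d$, and by Proposition~\ref{prop:path_label} we have $v_n = (d_1 d_2 \cdots d_n)_{P/Q}$. The plan is to assume that $w$ is eventually periodic and to show that the node sequence $(v_n)$ is then forced to vanish from some point on. Since the $P/Q$-expansion of $0$ is empty (zero attractor, uniqueness), this will force the periodic part to be $0^\omega$ and the preperiod to consist of zeros as well.

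Suppose $w = u\,(c_1 \cdots c_t)^\omega$ with $|u| = m$. For $n \ge m$, the node reached after one further period is the affine image
\[
v_{n+t} = (Q^{-1}P)^t v_n + c, \qquad c := \sum_{i=1}^{t} (Q^{-1}P)^{t-i} Q^{-1} c_i,
\]
where $c$ does not depend on $n$. Set $A := (Q^{-1}P)^t$; it is expanding because $Q^{-1}P$ is. Then $I - A$ is invertible, and $y := (I-A)^{-1}c$ is the unique fixed point of $x \mapsto Ax + c$. Along the subsequence $n = m + jt$ we get $v_{m+jt} - y = A^j (v_m - y)$. Now run the digit dynamics backwards: by the prefix-closedness argument, $\Phi_r(v_n) = v_{n-1}$ whenever the last digit of the node label is the canonical digit. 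Since $\D$ is a complete residue system, each node has at most one parent, and that parent is $\Phi_r(v_n)$. Hence $v_{m+jt}$ is a point of $\Z^d$ whose $\Phi_r$-orbit returns to $0$ after $m+jt$ steps along the path. This holds for every $j$, which lets us reinterpret the periodic block as a genuine $\Phi_r$-cycle structure.

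The key step is to rule out $v_m \neq y$ and $y \neq 0$, using the finiteness of orbits. If $v_m \neq y$, then $\|v_{m+jt}\| \to \infty$ because $A$ is expanding. That is not yet a contradiction, so instead look at $\Phi_r^{t}$ applied to $v_{m+(j+1)t}$, which equals $v_{m+jt}$; these are backward images. The cleaner route is to consider $y$ itself. Write $y$ via the purely periodic word, $y = ((c_1\cdots c_t)^\infty)_{P/Q}$ in the sense of \eqref{eq:pqadic}; $y$ is rational and $y \in \Z^d[Q^{-1}P]$ after multiplying out. The differences $v_{m+jt} - y = A^j(v_m - y)$ lie in the lattice $\Z^d - y$, and applying $\Phi$ (which agrees with $\Phi_r$ on these points, by the digit computation) maps $v_{m+(j+1)t}$ to $v_{m+jt}$. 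Consequently $y$ is a fixed point of $\Phi^t$, that is, a periodic point of $\Phi$, so $y \in \A_\Phi$. The zero-attractor hypothesis gives $y = 0$, hence $c = 0$. Since $Q^{-1}$ and $Q^{-1}P$ are invertible and the digits are canonical residues, $c = 0$ forces $c_1 = \cdots = c_t = 0$, by the uniqueness of expansions applied to $\sum (Q^{-1}P)^{t-i}Q^{-1}c_i = 0$. Then $v_{m+jt} = A^j v_m$ must be integral for all $j$. Reading the path backwards again, $v_m$ has expansion $u$ followed by arbitrarily many zeros, so uniqueness of the minimal expansion combined with Lemma~\ref{lem:attractor} forces $u = 0^m$.

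The main obstacle is the step that identifies $y$ as a periodic point of $\Phi$ (or of $\Phi_r$) and uses this to place it in the attractor. It requires checking that the digit chosen by $\d$ at each step of the backward orbit agrees with the path digit $c_i$, which holds because residues modulo $P\Z^d[Q^{-1}P]$ are unique (Lemma~\ref{lem:subgp}). It also requires care that $y$ may be non-integral, so I would work with $\Phi$ on $\Z^d[Q^{-1}P]$ and invoke Lemma~\ref{lem:attractor} to conclude $\A_\Phi = \A_{\Phi_r} = \{0\}$.
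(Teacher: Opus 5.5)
Your proposal has a genuine gap: it never establishes the arithmetic fact the paper's proof rests on. In your notation, let $z:=v_{m+t}-v_m$; in the paper this is $(uv)_{P/Q}-(u)_{P/Q}$, with $v$ the periodic block. Then $(Q^{-1}P)^{jt}z=v_{m+(j+1)t}-v_{m+jt}\in\Z^d$ for every $j$. The paper concludes $z=0$ from the $P^{-1}Q$-adic valuation, and uniqueness of finite expansions then gives $c_1=\dots=c_t=0$ and $u=0^m$. You try to replace this with expansiveness and the zero attractor, and two steps break.

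First, the claim that $y=(I-A)^{-1}c$ lies in $\Z^d[Q^{-1}P]$ ``after multiplying out'' is unsupported. The inverse $(I-A)^{-1}$ introduces the denominator $\det(I-A)$, and $\Z^d[Q^{-1}P]$ is not closed under it. Moreover $y=v_m+(I-A)^{-1}z$, so membership hinges on exactly the vector $z$ you never control. Without it, $y$ is not in the domain of $\Phi$ and cannot be a periodic point or an attractor element. (Even with membership, $\Phi^t(y)=y$ would have to come from a digit computation at $y$ itself. It does not follow from $\Phi^t(v_{m+(j+1)t})=v_{m+jt}$, since those points move away from $y$.)

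Second, even granting $c_1=\dots=c_t=0$, your closing step does not follow. The condition $(Q^{-1}P)^{jt}v_m\in\Z^d$ for all $j$ is perfectly consistent with uniqueness of expansions: $u0^{jt}$ is then just the unique expansion of $(Q^{-1}P)^{jt}v_m$, and Lemma~\ref{lem:attractor} adds nothing. Excluding $v_m\neq 0$ is again the statement that $(Q^{-1}P)^{jt}x\in\Z^d$ for all $j$ forces $x=0$.

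That statement is genuinely extra input; it does not follow from expansiveness and a zero attractor. Take $P=\diag(3,5)$, $Q=\diag(1,2)$ and $\D=\{(i,j)^\top : |i|\le 1,\ |j|\le 2\}$. The determinants are coprime, $Q^{-1}P=\diag(3,\frac52)$ is expanding, and balanced digits in each coordinate give zero attractor. Nevertheless, the words $(1,0)^\top 0^\omega$ and $\big((1,0)^\top\big)^\omega$ label infinite paths from the root. They pass through the integer nodes $(3^j,0)^\top$ and $(\frac{3^j-1}{2},0)^\top$, respectively.
\begin{itemize}
\item For the second word, $y=(-\frac12,0)^\top\notin\Z\times\Z[\frac12]=\Z^2[Q^{-1}P]$. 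This is where your first step fails.
\item The first word has $c=0$ and defeats your closing step.
\end{itemize}

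Any correct proof therefore needs a hypothesis guaranteeing $\bigcap_k (P^{-1}Q)^k\Z^d[P^{-1}Q]=\{0\}$. One sufficient condition is that the characteristic polynomial of $Q^{-1}P$ is irreducible over $\Q$ and not in $\Z[x]$. The paper's inference from $\abs{(uv)_{P/Q}-(u)_{P/Q}}_{P^{-1}Q}=0$ to equality tacitly uses precisely this nondegeneracy of the valuation. In the example above it fails: $(P^{-1}Q)^k\Z^2[P^{-1}Q]=\Z[\frac13]\times 2^k\Z[\frac15]$ for all $k$.
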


To prove Proposition \ref{prop:periodic}, we first recall the notion of $B$-adic valuations introduced by Rossi et al.~\cite{rossi}. 
If $A$ is an expanding $d \times d$ matrix with rational entries and $B = A^{-1}$, the \textit{$B$-adic valuation} on $\Z^d[B]$ is the mapping $\nu_B : \Z^d[B] \to \N \cup \brc{\infty}$ defined by
\[
\nu_B(y) =
\begin{cases}
	\min \brc{k \in \N \mid y \in B^k\Z^d[B] \setminus B^{k+1} \Z^d[B]}, &\text{ if } y \neq 0, \\
	\infty, &\text{ if } y = 0.
\end{cases}
\]
The corresponding \textit{$B$-adic metric} on $\Z^d[B]$ is defined by
\[
\abs{y-z}_B = b^{-\nu_B(y-z)}
\]
where $b = \abs{\Z^d[B]/B\Z^d[B]}$, $y,z \in \Z^d[B]$, and $b^{-\infty} = 0$. 
We refer the reader to \cite{rossi} for a more detailed discussion of $B$-adic valuations and $B$-adic series. 
We are now ready to prove Proposition \ref{prop:periodic}.

\begin{proof}[Proof of Proposition \ref{prop:periodic}]
Let $uv^\omega \in W_{P/Q}$, where $u \in 0^*\Ell_{P/Q}$ and $v \in \D^*$ of length $\ell > 0$. 
Then every finite prefix of $uv^\omega$ is the path label of a node in $T_{P/Q}$.
Hence, for each $n\in\N$, $uv^n \in 0^\ast\Ell_{P/Q}$, and so $(uv^n)_{P/Q} - (uv^{n-1})_{P/Q} \in \Z^d$. Now,
\begin{align*}
	(uv^n)_{P/Q} - (uv^{n-1})_{P/Q} &= (Q^{-1}P)^{(n-1)\ell} (uv)_{P/Q} + (v^{n-1})_{P/Q} - (Q^{-1}P)^{(n-1)\ell}(u)_{P/Q} - (v^{n-1})_{P/Q} \\
	&= (Q^{-1}P)^{(n-1)\ell} \big( (uv)_{P/Q} - (u)_{P/Q} \big) \in \Z^d \text{ for all } n \in \N.
\end{align*}
Thus, $(uv)_{P/Q} - (u)_{P/Q} \in (P^{-1}Q)^{(n-1)\ell} \Z^d$ for all $n \in \N$. 
Hence, for every $n\in\N$,  
\[\nu_{P^{-1}Q}((uv)_{P/Q} - (u)_{P/Q})\geq (n-1)\ell,\]
and so
\[
\abs{(uv)_{P/Q} - (u)_{P/Q}}_{P^{-1}Q} \leq \dfrac{1}{b^{(n-1)\ell}},
\]
where $b = \abs{\Z^d[P^{-1}Q]/(P^{-1}Q)\Z^d[P^{-1}Q]}\geq 1$. 
Letting $n \to \infty$, we obtain \[\abs{(uv)_{P/Q} - (u)_{P/Q}}_{P^{-1}Q} = 0,\] 
which implies that $(uv)_{P/Q} = (u)_{P/Q}$. 
By the uniqueness of finite $Q^{-1}P$-expansions, the words $uv$ and $u$ are identical. 
Hence $v$ must consist entirely of zeros, and because words in $\Ell_{P/Q}$ cannot have leading zeros, it follows that $u=0^m$.
Therefore, $uv^\omega=0^\omega$.
\end{proof}

\subsection{Representations of real vectors: an example and open problems} 
We conclude this paper by considering representations of real vectors in the digit system $(P,Q,\D)$. 
Unlike the case of rational number bases studied in \cite{frac-akiyama}, 
determining which vectors in $\R^d$ admit a base-$Q^{-1}P$ expansion appears to be substantially more difficult.
Our goal in this section is therefore not to develop a general theory, but rather to examine an illustrative example. 
Even so, this example reveals several interesting phenomena and suggests directions for future research.

To begin, we define the \textit{numerical value} of an infinite word $w = d_{-1} d_{-2} \cdots \in \D^{\omega}$ by
\begin{equation} \label{eq:inf_numval}
(\bdot w)_{P/Q} = \sum_{i=0}^\infty (Q^{-1}P)^{-i} Q^{-1}d_{-i}.
\end{equation}
Since $Q^{-1}P$ is expanding, the series in \eqref{eq:inf_numval} converges. 
Hence, every infinite word $w \in W_{P/Q}$ has a well-defined numerical value in $\R^d$. 
In this case, $\bdot w$ is called a \emph{$Q^{-1}P$-expansion} (or simply an expansion) of its numerical value.

Denote by $X_{P/Q}$ the set of all vectors in $\R^d$ admitting a $Q^{-1}P$-expansion.
Understanding the geometric and topological properties of $X_{P/Q}$ appears to be an interesting problem.  
Figure \ref{fig:treetile} shows a numerical approximation of $X_{P/Q}$ for the case where $P = \begin{bmatrix*}[r]	1 & 6 \\ 0 & 5 \end{bmatrix*}$ and $Q = \begin{bmatrix*}[r] 2 & 0 \\ -2 & -1 \end{bmatrix*}$, with digit set \[\D = \brc{(0,0)^\top, (2,-1)^\top, (3,-2)^\top, (4,-3)^\top, (5,-4)^\top}.\]

\begin{figure}[H]
\begin{center}
	\includegraphics[scale=0.5]{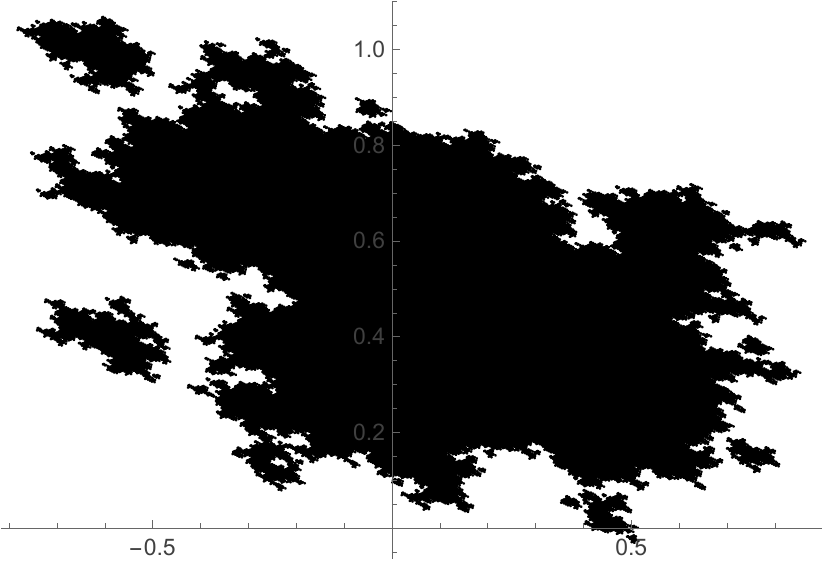}
	\caption{Numerical approximation of $X_{P/Q}$ for $P = \begin{bmatrix*}[r]	1 & 6 \\ 0 & 5 \end{bmatrix*}$, $Q = \begin{bmatrix*}[r] 2 & 0 \\ -2 & -1 \end{bmatrix*}$, and 
		$\D = \brc{(0,0)^\top, (2,-1)^\top, (3,-2)^\top, (4,-3)^\top, (5,-4)^\top}$.}
	\label{fig:treetile}
\end{center}
\end{figure}

If $v \in \Ell_{P/Q}$, we define
\begin{equation} \label{eq:inf_numval2}
(v \bdot w)_{P/Q} = (v)_{P/Q} + \sum_{i=0}^\infty (Q^{-1}P)^{-i} Q^{-1}d_{-i}.
\end{equation}
Thus, the digits to the left of the radix point are multiplied by nonnegative powers of $Q^{-1}P$, while those to the right are multiplied by negative powers. 
For each $v \in \Ell_{P/Q}$, let  \[\mathcal{G}_{P/Q}(v) = \brc{(v \bdot w)_{P/Q} \mid w \in W_{P/Q}}.\] 
We conjecture that the collection $\mathcal{T} = \brc{\mathcal{G}_{P/Q}(v) \mid v \in \Ell_{P/Q}}$ forms a \textit{tiling} of $\R^d$. 
That is, the union of the sets in $\mathcal{T}$ is $\R^d$, and the intersection of any two distinct sets in $\mathcal{T}$ has Lebesgue measure zero. 
Figure~\ref{fig:tileunion} shows several sets $\mathcal{G}_{P/Q}(v)$ for the same digit system $(P,Q,\D)$ as in Figure~\ref{fig:treetile}.

\begin{figure}[H]
\begin{center}
	\includegraphics[scale=0.75]{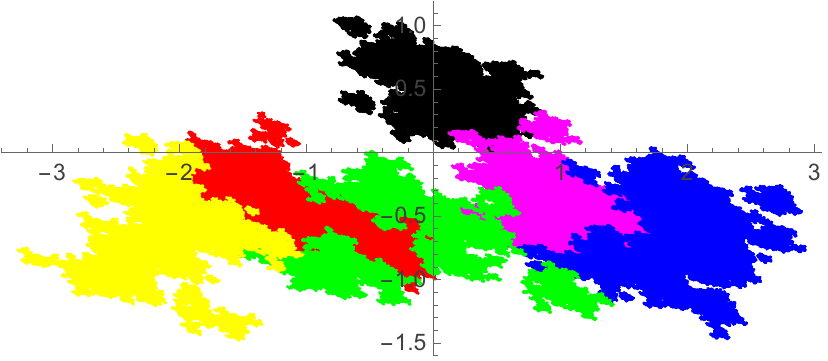}
	\caption{Numerical approximations of several tiles $\mathcal{G}_{P/Q}(v)$ for $P = \begin{bmatrix*}[r]	1 & 6 \\ 0 & 5 \end{bmatrix*}$, $Q = \begin{bmatrix*}[r] 2 & 0 \\ -2 & -1 \end{bmatrix*}$, and 
		$\D = \brc{(0,0)^\top, (2,-1)^\top, (3,-2)^\top, (4,-3)^\top, (5,-4)^\top}$.}
	\label{fig:tileunion}
\end{center}
\end{figure}

\end{document}